\newtheorem{thm}{Theorem}[section]
\newtheorem{lem}[thm]{Lemma}
\newtheorem{cor}[thm]{Corollary}
  \newcommand{\C}{{\mathcal{C}}}
 \newcommand{\X}{{\mathcal{X}}}
\title{Sufficient connectivity conditions for rigidity of symmetric frameworks}
\author{Vikt\'oria E. Kaszanitzky\thanks{
Budapest University of Technology and Economics, Magyar tud\'osok krt 2., Budapest, 1117, Hungary and the MTA-ELTE Egerv\'ary Research Group on Combinatorial Optimization, P\'azm\'any P\'eter s\'et\'any 1/C, Budapest, 1117, Hungary
(\texttt{kaszanitzky@cs.bme.hu}).
}
\and
Bernd Schulze \thanks{
Department of Mathematics and Statistics,
Lancaster University,
Lancaster,
LA1 4YF, United Kingdom
(\texttt{b.schulze@lancaster.ac.uk}).}
}
\begin{document}

\maketitle

\begin{abstract}
It is a famous result of Lov\'asz and Yemini (1982) that 6-connected graphs are rigid in the plane. This was recently improved by Jackson and Jord\'an (2009) who showed that 6-mixed connectivity is also sufficient for rigidity.
Here we give sufficient graph connectivity conditions for both `forced symmetric' and `incidentally symmetric' infinitesimal rigidity in the plane.
\end{abstract}

\noindent {\em Key words}: rigidity; symmetric framework, highly connected graphs.


\section{Introduction}\label{sec:intro}

A $d$-dimensional \emph{(bar-joint) framework} is a pair $(\tilde G,p)$, where $\tilde G=(\tilde V, \tilde E)$ is a finite simple graph and $p:\tilde V \to \mathbb{R}^d$ is an injective map. A $d$-dimensional framework is called \emph{rigid} if, loosely speaking, the vertices cannot be moved continuously in $\mathbb{R}^d$ to obtain another non-congruent framework while keeping the lengths of all edges fixed. A classical approach to study the rigidity of frameworks is to differentiate the length constraints on the edges, which gives rise to the linear theory of infinitesimal rigidity \cite{W1}.
An \emph{infinitesimal motion} of 
$(\tilde G,p)$ 
is a function $u: \tilde V\to \mathbb{R}^{d}$ such that
\begin{equation}
\label{infinmotioneq}
\langle p_i-p_j, u_i-u_j\rangle =0 \quad\textrm{ for all } \{i,j\} \in \tilde E\textrm{,}
\end{equation}
where $p_i=p(i)$ and $u_i=u(i)$ for each $i$. An infinitesimal motion $u$ of $(\tilde G,p)$ is a \emph{trivial infinitesimal motion}
if there exists a skew-symmetric matrix $S$
and a vector $t$ such that $u_i=S p_i+t$ for all $i\in \tilde V$.
$(\tilde G,p)$ is \emph{infinitesimally rigid} if every infinitesimal motion of $(\tilde G,p)$ is trivial, and \emph{infinitesimally flexible} otherwise.

A framework $(\tilde G,p)$ is called \emph{generic} if the coordinates of the image of $p$ are algebraically independent over $\mathbb{Q}$. It is well known that  for generic frameworks, rigidity is equivalent to infinitesimal rigidity \cite{asiroth}. Laman's landmark result from 1970 gives a combinatorial characterisation of generic  rigid frameworks in $\mathbb{R}^2$ \cite{laman}. An alternative proof of this result based on  matroid theory  was given  in \cite{LY}. In that same paper  Lov\'asz and Yemini also established sufficient graph connectivity conditions for the  rigidity of generic frameworks in $\mathbb{R}^2$. Their result was recently improved by Jackson and Jord\'an in \cite{jj6}. Analogous results for higher dimensions have not yet been found and remain key open problems in the field.

Rigidity theory has applications in various areas of science and technology, ranging from mechanical and structural engineering through robotics and CAD programming to materials science and biochemistry. Since many structures in these areas of application exhibit non-trivial symmetries, the study of how symmetry impacts the rigidity and flexibility of frameworks has become a highly active research area in recent years (see \cite{ithesis, jkt, MT2, ST, BSWWorbit, DCGsym} for example).

There are two basic approaches to this problem. First, one may ask whether a framework is `forced symmetric rigid', that is it cannot be deformed without breaking the original symmetry of the structure. 
Combinatorial characterisations of the graphs whose generic realisations (modulo the given symmetry constraints) are forced symmetric rigid have been established for all symmetry groups in the plane, except for dihedral groups of order $2k$, where $k$ is even \cite{jkt,MT2}. More generally,
one may ask if a symmetric framework is infinitesimally rigid, i.e., whether it does not have \emph{any} non-trivial deformations. This problem of analysing the rigidity of `incidentally symmetric' frameworks is  more complex. However, combinatorial characterisations for symmetry-generic infinitesimal rigidity have recently been established for a number of cyclic groups in the plane \cite{ithesis,ST}.

In this paper we extend the sufficient graph connectivity conditions for generic rigidity established in \cite{jj6,LY} to both forced symmetric and incidentally symmetric frameworks. In Section~\ref{sec:symfw}, we first reprise some basic symmetry terminology and summarize the combinatorial characterisations of forced symmetric and incidentally symmetric rigid frameworks for all the groups in the plane for which such a characterisation is known. Moreover, we provide formulas for the rank functions of the corresponding linear count matroids defined on the edge sets of the underlying quotient gain graphs. In Section~\ref{sec:coveringsufficient} we then present our main results, which are sufficient graph connectivity conditions for symmetric frameworks to be forced symmetric rigid or infinitesimally rigid for all relevant symmetry groups in the plane. Moreover, we provide examples that show that our conditions are best possible. We prove these results in Sections~\ref{sec:gainmixed} and \ref{sec:suffforced} by relating the connectivity conditions on the symmetric graphs given in Section~\ref{sec:coveringsufficient} with connectivity conditions on the corresponding quotient gain graphs and by using the rank functions given in Section~\ref{sec:symfw}.

\section{Rigidity of symmetric frameworks}\label{sec:symfw}

\subsection{Symmetric graphs}\label{subsec:symgraphs}

Let $\tilde{G}=(\tilde{V},\tilde{E})$ be a finite simple graph. An {\em action} of a group $\Gamma$ on $\tilde{G}$ is a group homomorphism $\theta:\Gamma \rightarrow {\rm Aut}(\tilde{G})$, where ${\rm Aut}(\tilde{G})$ denotes the automorphism group of $\tilde{G}$.
An action $\theta$ is called {\em free} on $\tilde{V}$ (resp., $\tilde{E}$)
if  $\theta(\gamma)(i)\neq i$ for every $i\in \tilde{V}$ (resp., $\theta(\gamma)(e)\neq e$ for every $e\in \tilde{E}$) and
every non-identity $\gamma\in \Gamma$.
We say that a graph $\tilde{G}$ is {\em $\Gamma$-symmetric} (with respect to $\theta$)
if $\Gamma$ acts on $\tilde{G}$ by $\theta$.
In the following we will frequently
omit to specify the action $\theta$ if it is clear from the context.
We then denote $\theta(\gamma)(i)$ by  $\gamma i$. For simplicity, we will assume throughout this paper that $\theta$ acts freely on $\tilde{V}$.

 For a $\Gamma$-symmetric graph $\tilde{G}=(\tilde{V},\tilde{E})$, the \emph{quotient $\Gamma$-gain graph} of $\tilde{G}$ is the pair $(G,\psi)$, where  $G=(V,E)$ is the quotient graph of $\tilde{G}$, together  with an orientation on the edges, and  $\psi:E\to \Gamma$ is an edge labelling  defined as follows.
Each edge orbit $\Gamma e$ connecting $\Gamma i$ and $\Gamma j$ in $\tilde{G}/\Gamma$ can be written as $\{\{\gamma i,\gamma \circ\alpha j\}\mid \gamma\in \Gamma \}$ for a unique $\alpha\in \Gamma$. For each $\Gamma e$, orient $\Gamma e$ from $\Gamma i$ to $\Gamma j$ in $\tilde{G}/\Gamma$ and assign to it the gain $\alpha$. Then $E$ is the resulting set of oriented edges, and $\psi$ is the corresponding gain assignment. See Figure~\ref{fig:gain}(b) for an example of a quotient $\Gamma$-gain graph.

Note that $(G,\psi)$ is unique up to choices of representative vertices. Moreover, the orientation is only used as a reference orientation and may be changed, provided that we also modify $\psi$ so that if  $e$ is an edge in one direction, and $e^{-1}$ is the same edge in the opposite direction, then $\psi(e^{-1})=\psi(e)^{-1}$.

Let $\tilde{G}$ be a finite simple $\Gamma$-symmetric graph and let $(G,\psi)$ be its quotient $\Gamma$-gain graph. Then $\tilde{G}$ is called the \emph{covering graph} of $(G,\psi)$. Furthermore, the map $c:\tilde{G}\rightarrow G$ which maps every element of a vertex orbit of $\tilde{G}$ to its representative vertex in $G$ and every element of an edge orbit of $\tilde{G}$ to its representative edge in $G$ is called a {\em  covering map}.

\subsection{Symmetric frameworks}\label{subsec:symfw}

Let $\tilde{G}$ be a $\Gamma$-symmetric graph (with respect to $\theta:\Gamma\to \textrm{Aut}(\tilde{G})$), and let $\Gamma$ act on $\mathbb{R}^d$ via the homomorphism $\tau:\Gamma\rightarrow O(\mathbb{R}^d)$.
A framework $(\tilde{G},p)$ is called {\em $\Gamma$-symmetric} (with respect to $\theta$ and $\tau$) if
\begin{equation}
\label{eq:symmetric_func}
\tau(\gamma) (p(i))=p(\theta(\gamma) i) \qquad \text{for all } \gamma\in \Gamma \text{ and all } i\in \tilde{V}.
\end{equation}

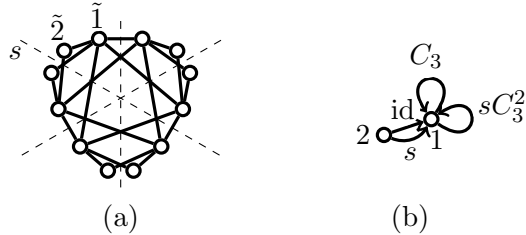
\begin{figure}[htp]
\begin{center}
\begin{tikzpicture}[very thick,scale=0.7]
\tikzstyle{every node}=[circle, draw=black, fill=white, inner sep=0pt, minimum width=5pt];

\draw[thin,dashed] (0,-2.3)--(0,1.6);
\draw[thin,dashed] (30:2.2cm)--(210:2.2cm);
\draw[thin,dashed] (150:2.2cm)--(330:2.2cm);

\node (p1) at (70:1.2cm) {};
\node (p11) at (190:1.2cm) {};
\node (p111) at (310:1.2cm) {};

\node (p2) at (110:1.2cm) {};
\node (p22) at (230:1.2cm) {};
\node (p222) at (350:1.2cm) {};

\node (q1) at (40:1.4cm) {};
\node (q11) at (160:1.4cm) {};
\node (q111) at (280:1.4cm) {};

\node (q2) at (20:1.4cm) {};
\node (q22) at (140:1.4cm) {};
\node (q222) at (260:1.4cm) {};

\draw(p1)--(p2);
\draw(p2)--(q22);
\draw(p2)--(q11);
\draw(p2)--(p22);
\draw(p2)--(p222);

\draw(p11)--(p22);
\draw(p22)--(q222);
\draw(p22)--(q111);
\draw(p22)--(p222);

\draw(p111)--(p222);
\draw(p222)--(q2);
\draw(p222)--(q1);

\draw(p11)--(q22);
\draw(p11)--(q11);
\draw(p111)--(q222);
\draw(p111)--(q111);
\draw(p1)--(q2);
\draw(p1)--(q1);

\draw(p1)--(p111);
\draw(p11)--(p1);
\draw(p111)--(p11);

\node [draw=white, fill=white] (a) at (-0.45,1.58)  {$\tilde{1}$};
\node [draw=white, fill=white] (a) at (-1.2,1.35)  {$\tilde 2$};

\node [draw=white, fill=white] (a) at (-2,0.86)  {$s$};
\node [draw=white, fill=white] (a) at (0,-2.3)  {(a)};


\node [draw=white, fill=white] (a) at (4.6,-0.7)  {$2$};
\node [draw=white, fill=white] (a) at (6,-0.79)  {$1$};

\node (a) at (5,-0.7) {};
\node (b) at (5.9,-0.4) {};
\draw[->](a)--(b);
\path (a) edge [->,bend right=42] (b);

\draw[->] (b) to [out=55,in=125,looseness=15] (b);
\draw[->] (b) to [out=325,in=35,looseness=15] (b);

\node [draw=white, fill=white] (a) at (5.36,-0.2)  {$\textrm{id}$};
\node [draw=white, fill=white] (a) at (5.5,-1.05)  {$s$};
\node [draw=white, fill=white] (a) at (5.8,0.8)  {$C_3$};
\node [draw=white, fill=white] (a) at (7.25,-0.15)  {$sC_3^2$};

\node [draw=white, fill=white] (a) at (5.5,-2.3)  {(b)};
\end{tikzpicture}
\end{center}
\vspace{-0.4cm}
\caption{A framework with dihedral symmetry $\tau(\Gamma)=\mathcal{C}_{3v}=\langle s,C_3\rangle$  (a) and its corresponding quotient $\Gamma$-gain graph (b).}
\label{fig:gain}\end{figure}

Let $G=(V,E)$ be the quotient $\Gamma$-gain graph of $\tilde{G}$ with the covering map $c:\tilde{G}\rightarrow G$.
It is convenient to fix a representative vertex $i$ of each vertex orbit $\Gamma i=\{\gamma  i\colon \gamma\in \Gamma\}$,
and define the {\em quotient} of $p$ to be $p':V\rightarrow \mathbb{R}^d$,
so that there is a one-to-one correspondence between $p$ and $p'$ given by
$p(i)=p'(c(i))$ for each representative vertex $i$.

For the group $\tau(\Gamma)$, let $\mathbb{Q}_{\Gamma}$ be the field
 generated by $\mathbb{Q}$ and the entries of the matrices in $\tau(\Gamma)$.
We say that $p$  is {\em  $\Gamma$-generic}
if the set of coordinates of the image of $p'$ is algebraically independent over $\mathbb{Q}_{\Gamma}$.
Note that this definition does not depend on the choice of representative vertices. A $\Gamma$-symmetric framework $(\tilde{G},p)$ is called
\emph{$\Gamma$-generic} if $p$ is $\Gamma$-generic.

Throughout this paper, we will use the Schoenflies notation to describe the symmetries of frameworks. Note that in dimension 2, $\tau(\Gamma)$ can only be a reflection group of order $2$ (denoted by $\mathcal{C}_s$), a rotational group of order $k$ generated by a rotation $C_k$ about the origin by $2 \pi/k$, $ k\in\mathbb{N}$ (denoted by $\mathcal{C}_k$), or a dihedral group of order $2k$ generated by a reflection and a rotation $C_k$ (denoted  by $\mathcal{C}_{kv}$).

\subsection{Forced symmetric rigidity}\label{subsec:forced}

An infinitesimal motion $u$ of a $\Gamma$-symmetric framework $(\tilde G,p)$ is called \emph{$\Gamma$-symmetric} (with respect to $\theta$ and $\tau$) if the velocity vectors exhibit the same symmetry as $(\tilde G,p)$, that is, if $\tau(\gamma)u_i=u_{\gamma i}$ for all $\gamma\in \Gamma$ and all $i\in \tilde V.$
Moreover, we say that $(\tilde G,p)$ is \emph{$\Gamma$-symmetric infinitesimally rigid} if every $\Gamma$-symmetric infinitesimal motion is trivial.

A key motivation for studying $\Gamma$-symmetric infinitesimal rigidity is that for $\Gamma$-generic frameworks, there exists a non-trivial $\Gamma$-symmetric infinitesimal motion if and only if there exists a non-trivial symmetry-preserving \emph{continuous} motion \cite{BSWWorbit,DCGsym}.

For a $d$-dimensional $\Gamma$-symmetric framework $(\tilde G,p)$, a symmetric analog of the rigidity matrix $R(\tilde G,p)$ \cite{W1}, known as the \emph{orbit rigidity matrix} was introduced in \cite{BSWWorbit}. This matrix is  of size $|E| \times d|V|$ and completely describes the $\Gamma$-symmetric infinitesimal rigidity properties of $(\tilde G,p)$. In particular, its kernel is isomorphic to the space of  $\Gamma$-symmetric infinitesimal motions of $(\tilde G,p)$. 
We define the {\em  rigidity matroid} of $(G,\psi)$, $\mathcal{R}_{\tau}(G,\psi)$, to be the row matroid of the orbit rigidity matrix of a $\Gamma$-generic realisation of $G$ (with respect to  $\theta$ and $\tau$). The bases of this matroid have been characterised for $\mathcal{C}_s$, $\mathcal{C}_k$, $k\in \mathbb{N}$, and $\mathcal{C}_{(2k+1)v}$, $k\in \mathbb{N}$, in \cite{jkt,MT2}. (For the groups $\mathcal{C}_{(2k)v}$, however, this problem  is still open \cite{jkt}.) To state these results, we need the following definitions.

Let $(G,\psi)$ be a quotient $\Gamma$-gain graph. The \emph{gain} $\psi(W)$ of a closed  walk $W$ in $(G,\psi)$ of the form $v_1,e_1,v_2,e_2,v_3,\dots,v_l,e_l,v_1$ is defined as $\Pi_{i=1}^l \psi(e_i)^{{\rm sign}(e_i)}$, where ${\rm sign}(e_i)=1$ if $e_i$ is directed from $v_i$ to $v_{i+1}$, and ${\rm sign}(e_i)=-1$  otherwise.  For $E'\subseteq E$ and $i\in V(E')$, we define the \emph{subgroup of $\Gamma$ induced by $E'$} as $\langle E' \rangle_{\psi,i}=\{\psi(W): \, W\in \mathcal{W}(E',i)\}$, where  $\mathcal{W}(E',i)$ is the set of closed walks starting at $i$ using only edges of $E'$.
A connected subset $E'\subseteq E$ is  called \emph{balanced} if $\langle E' \rangle_{\psi,i}=\{\textrm{id}\}$ for some $i\in V(E')$ (or equivalently for all $i\in V(E')$). Further, a connected subset $E'\subseteq E$ is  called \emph{cyclic} if $\langle E' \rangle_{\psi,i}$ is a cyclic subgroup of $\Gamma$ for some $i\in V(E')$ (or equivalently for all $i\in V(E')$). In what follows only certain properties of $\langle E' \rangle_{\psi,i}$ are important rather than the group itself. These properties are invariant under conjugation and because $\langle E' \rangle_{\psi,i}$ is a conjugate of $\langle E' \rangle_{\psi,j}$ for every $j\in V(E')$ we will drop the index referring to the starting point of the walks.
A (possibly disconnected) subset $E'\subseteq E$ is called \emph{balanced} (\emph{cyclic}, resp.) if each of its connected components is balanced (cyclic). A subset $E'\subseteq E$ which is not balanced is called \emph{unbalanced}.

Given a quotient $\Gamma$-gain graph $(G,\psi)$, let \(\rho\) be the function on  \(E\) defined  by \(\rho(X)=2|V(X)|-3+\beta(X)\) for \(X\subseteq E\) where
\[\beta(X)= \begin{cases}
	 0 \hbox{ if X is balanced;}\\
	 2 \hbox{ if X is unbalanced and cyclic;}\\
	 3 \hbox{ otherwise.}
\end{cases}\]

\begin{thm} \label{thm:symlaman} \cite{jkt} Let $(\tilde G,p)$ be a $\Gamma$-generic framework (with respect to $\theta$ and $\tau$) such that $\tau(\Gamma)$ is  $\mathcal{C}_s$, $\mathcal{C}_k$ or $\mathcal{C}_{(2k+1)v}$. Further, let $(G,\psi)$ be the quotient $\Gamma$-gain graph of $\tilde G$ with \(G=(V,E)\), and let $E'\subseteq E$.
Then $E'$ is independent in $\mathcal{R}_{\tau}(G,\psi)$ if and only if $\rho(F)\geq
|F|$ for all $\emptyset\neq F\subseteq E'$. Further, $(\tilde G,p)$ is $\Gamma$-symmetric infinitesimally rigid if and only if $(G,\psi)$ contains a spanning independent set of $2|V|-1$ or $2|V|$ edges, depending on wether $\Gamma$ is a non-trivial cyclic or dihedral group.
\end{thm}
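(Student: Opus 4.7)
The plan is to work with the orbit rigidity matrix $O=O(G,\psi,p')$ introduced in \cite{BSWWorbit}, whose row matroid is by definition $\mathcal{R}_{\tau}(G,\psi)$. I would first establish the independence characterisation via $\rho$, and then derive the global rigidity count from it using the dimension of the space of $\Gamma$-symmetric trivial infinitesimal motions of $(\tilde G,p)$ in $\mathbb{R}^2$.

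For the necessity of the sparsity count (if $E'$ is independent then $\rho(F)\geq |F|$ for all $F\subseteq E'$), the key step is to bound the rank of the submatrix of $O$ indexed by $F$. Lifting to the covering graph $\tilde G$, the preimage $c^{-1}(F)$ decomposes into $[\Gamma:\langle F\rangle_\psi]$ isomorphic connected components, each a cover of the $F$-subgraph by the subgroup $\langle F\rangle_\psi\leq \Gamma$. The $|F|\times 2|V(F)|$ rank of $O$ restricted to $F$ coincides with the $\langle F\rangle_\psi$-symmetric rigidity-matrix rank of a single such component, and is thus bounded above by $2|V(F)|$ minus the dimension of the space of $\langle F\rangle_\psi$-symmetric trivial planar motions. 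A direct computation shows this latter dimension equals $3$ when $F$ is balanced ($\langle F\rangle_\psi=\{\mathrm{id}\}$), $1$ when $F$ is unbalanced and cyclic (only a rotation or a translation along a mirror survives), and $0$ otherwise, which precisely yields the rank bound $\rho(F)=2|V(F)|-3+\beta(F)$.

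For sufficiency, the strategy is an inductive construction of $(2,\beta)$-gain-tight graphs. Adapting the symmetric Henneberg-type moves developed in \cite{jkt,MT2}, one uses $0$-extensions (adding a degree-$2$ vertex with appropriately chosen gains), $1$-extensions (removing an edge and attaching a degree-$3$ vertex), and, in the unbalanced or non-cyclic cases, additional loop-addition and gain-splitting operations to account for jumps in $\beta$. One then shows that every such move preserves both the gain-sparsity count with respect to $\rho$ and, on the rigidity side, the independence of the orbit rigidity matrix—the latter by constructing an explicit $\Gamma$-generic placement of the new vertex for which the newly added rows are linearly independent of the old ones, in analogy with the classical Tay--Whiteley argument. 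Iterating from a simple base case (e.g.\ a single vertex with a small collection of loops whose gains generate the appropriate subgroup of $\Gamma$) produces independence of the full edge set. The main obstacle I anticipate lies precisely here: compiling a complete list of symmetric Henneberg moves sufficient to build all $(2,\beta)$-tight gain graphs for the three families $\mathcal{C}_s$, $\mathcal{C}_k$, $\mathcal{C}_{(2k+1)v}$, and verifying independence preservation move-by-move; the dihedral case is particularly delicate since gains must be tracked modulo conjugation and the orientability of reflections interacts with the edge-splitting step.

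The rigidity statement then follows: the kernel of $O$ is isomorphic to the space of $\Gamma$-symmetric infinitesimal motions of $(\tilde G,p)$, and a short computation shows that the subspace of trivial such motions has dimension $1$ when $\tau(\Gamma)\in\{\mathcal{C}_s,\mathcal{C}_k\}$ (a translation along the mirror, or a rotation about the centre) and dimension $0$ when $\tau(\Gamma)=\mathcal{C}_{(2k+1)v}$ (no non-zero translation or skew-symmetric matrix commutes with the full dihedral group). Consequently $(\tilde G,p)$ is $\Gamma$-symmetric infinitesimally rigid if and only if the row rank of $O$ equals $2|V|-1$ or $2|V|$, respectively, which by the independence characterisation is equivalent to $(G,\psi)$ containing a spanning independent set of the corresponding size.
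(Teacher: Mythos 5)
The theorem you are asked to prove is quoted in the paper as Theorem~\ref{thm:symlaman} with a citation to \cite{jkt}; the paper does not reprove it, so there is no internal proof to compare against. Your sketch is a reasonable reconstruction of the strategy actually carried out in \cite{jkt}: necessity via a rank bound on the relevant submatrix of the orbit rigidity matrix (where the correction term $\beta(F)$ arises precisely as $3$ minus the dimension of the space of $\langle F\rangle_\psi$-symmetric trivial motions), and sufficiency via an inductive Henneberg-type construction of $(2,\beta)$-tight gain graphs with independence preserved move-by-move, followed by the count of $\Gamma$-symmetric trivial motions to translate maximal rank into the $2|V|-1$ (cyclic) or $2|V|$ (dihedral) threshold. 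These ingredients and their logical order match the cited source.

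Two caveats, both concerning the sufficiency direction, which you yourself flag as incomplete. First, ``compiling a complete list of symmetric Henneberg moves'' is not merely bookkeeping: in \cite{jkt,MT2} the inductive characterisation of $(2,\beta)$-gain-tight graphs requires not just $0$- and $1$-extensions plus loop additions but also vertex splittings and, for the rotation groups, a nontrivial analysis of the base cases, and the geometric verification that each move preserves independence of the orbit rigidity matrix is itself substantial. Presenting this as a single step conceals most of the length of the actual proof. Second, your sketch treats the dihedral family uniformly, but the theorem is stated only for $\mathcal{C}_{(2k+1)v}$, and the even case $\mathcal{C}_{(2k)v}$ is explicitly open; the sketch does not indicate where the parity of the dihedral group enters, which is exactly the point at which the inductive argument in \cite{jkt} breaks down for even $k$. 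As a high-level outline your proposal is correct and faithful to the known proof, but it should not be mistaken for a complete argument.
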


The rank function of  $\mathcal{R}_{\tau}(G,\psi)$ is given by the following formula.

\begin{thm} \label{thm:symrank} \cite{jkt} Let $(\tilde G,p)$ be a $\Gamma$-symmetric framework (with respect to $\theta$ and $\tau$) such that $\tau(\Gamma)$ is  $\mathcal{C}_s$, $\mathcal{C}_k$ or $\mathcal{C}_{(2k+1)v}$.
Further let \((G,\psi)\) be the quotient \(\Gamma\)-gain graph of $\tilde G$ with \(G=(V,E)\). The rank of a set \(E'\subseteq E\) in $\mathcal{R}_{\tau}(G,\psi)$ is equal to
\[\min\left\{\sum_{i=1}^{s}\rho(E_i):\{E_1,\dots,E_s\}\hbox{ is a partition of } E'\right\}.\]
\end{thm}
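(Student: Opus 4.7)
The plan is to establish the formula via the standard two-sided min--max argument, with Theorem~\ref{thm:symlaman} as the key input.

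For the $\leq$ direction, fix any partition $\{E_1,\dots,E_s\}$ of $E'$ and let $I\subseteq E'$ be a maximum independent subset, so $|I|=r(E')$. For each part, $I\cap E_i$ is independent, so Theorem~\ref{thm:symlaman} yields $|I\cap E_i|\leq\rho(I\cap E_i)$. The function $\rho$ is monotone under inclusion: $|V(\cdot)|$ is trivially monotone, and the induced gain subgroup can only grow with the edge set, so any unbalanced (resp.\ non-cyclic) set remains so when enlarged. Hence $\rho(I\cap E_i)\leq\rho(E_i)$, and summing gives $r(E')=\sum_i|I\cap E_i|\leq\sum_i\rho(E_i)$, so $r(E')\leq\min_{\mathcal{P}}\sum_{i}\rho(E_i)$.

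For the $\geq$ direction, I would exhibit a partition attaining this bound. Fix a maximum independent set $I\subseteq E'$. For each $e\in E'\setminus I$, let $C_I(e)\subseteq I\cup\{e\}$ be the unique fundamental circuit; as a minimal dependent set, $\rho(C_I(e))=|C_I(e)|-1$. Define an equivalence relation on $E'\setminus I$ by declaring $e\sim f$ whenever $C_I(e)$ and $C_I(f)$ share an $I$-edge, extended transitively. The blocks $E_i$ of the desired partition of $E'$ are then each equivalence class together with the $I$-edges appearing in its fundamental circuits; edges of $I$ not contained in any such circuit are added as singletons, each contributing $\rho(\{e\})=1=|I\cap\{e\}|$. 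By construction, distinct blocks share no $I$-edges. The goal is to show each block $E_i$ is \emph{tight}, meaning $\rho(E_i)=|I\cap E_i|$, from which $\sum_i\rho(E_i)=|I|=r(E')$ follows.

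The main obstacle is establishing tightness, which reduces to a submodularity-type inequality for $\rho$: if $X,Y\subseteq E'$ are tight with overlapping $I$-edges, then $X\cup Y$ is also tight. For the vertex-count part $2|V(\cdot)|-3$, this is the classical Lov\'asz--Yemini submodularity used in the unsymmetric case. The new work is to control the $\beta$ term under unions of subsets whose gain components interact. When components of $X$ and $Y$ merge in $X\cup Y$, the induced subgroup of the merged component is generated by the combined subgroups of the originals, so a case analysis according to whether the resulting merged component is balanced, unbalanced-cyclic, or non-cyclic is needed to confirm that $\beta(X\cup Y)$ combines correctly. This case analysis, which leverages the algebraic structure of $\Gamma$-gain graphs, is the technical core of the argument in~\cite{jkt}.
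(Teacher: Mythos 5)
This is a cited result (Theorem~\ref{thm:symrank} is imported from \cite{jkt}), and the paper under review gives no proof of it, so there is no in-paper argument to compare against. That said, two comments on your outline.

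Your $\leq$ direction is fine: the monotonicity of $\rho$ under inclusion (both $|V(\cdot)|$ and $\beta(\cdot)$ can only grow as the edge set grows, since $\langle\cdot\rangle_\psi$ on a connected component can only grow) combined with Theorem~\ref{thm:symlaman} applied to each $I\cap E_i$ gives the upper bound on $r(E')$ cleanly; just note separately that parts with $I\cap E_i=\emptyset$ contribute $0\leq 1\leq\rho(E_i)$, since $\rho(\emptyset)=-3$ would otherwise break the chain of inequalities.

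The $\geq$ direction as written is not a proof. You explicitly defer ``the technical core of the argument'' --- the claim that the union of two tight sets sharing an $I$-edge is again tight --- to \cite{jkt}, which makes the argument circular: you are trying to reprove a theorem of \cite{jkt} while invoking the unproven part of \cite{jkt} as a black box. Moreover, the route you sketch (merge fundamental circuits along shared $I$-edges, prove tightness by induction) is the Nash-Williams/Lov\'asz--Yemini style argument and is \emph{not} the route taken in \cite{jkt}. There the rank formula is obtained structurally: one shows that $\rho$ is a monotone, intersecting-submodular, nonnegative function on $2^E\setminus\{\emptyset\}$ with $\rho(\{e\})\geq 1$ for every singleton, whence the standard Dilworth-truncation machinery yields a matroid whose rank function is exactly $\min\{\sum_i\rho(E_i)\}$ over partitions of $E'$; the independence characterisation (your Theorem~\ref{thm:symlaman}) then identifies this matroid with $\mathcal{R}_\tau(G,\psi)$. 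The hands-on circuit-merging approach can work but requires you to actually prove the tightness-union lemma --- which, for $\rho$, hinges on the same intersecting-submodularity of the $\beta$ term that the abstract route uses --- so you would not be saving any work, just repackaging it. If you want a self-contained proof, prove intersecting submodularity of $\rho$ (the content is a case analysis on how gain subgroups of overlapping connected components combine) and then cite the general rank formula for matroids induced by such functions.
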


\subsection{Infinitesimal rigidity of symmetric frameworks}\label{subsec:inc}

It is well known that the rigidity matrix of a $\Gamma$-symmetric framework can be transformed into a block-diagonalised form, with each block corresponding to an irreducible representation of $\Gamma$ \cite{ST,DCGsym}. The problem of analysing the infinitesimal rigidity  of a $\Gamma$-symmetric framework can therefore be broken up into independent subproblems, one for each irreducible representation. Using this approach, combinatorial characterisations of $\Gamma$-generic infinitesimally rigid frameworks have been established for a selection of cyclic groups in \cite{ithesis,ST}. We need the following definitions.

Let $\Gamma$ be the group $\mathbb{Z}_k=\{0,1,\ldots, k-1\}$ and for $t=0,1,\ldots, k-1$, let $\iota_t:\Gamma\rightarrow \mathbb{C}\setminus\{0\}$ be the irreducible representation of $\Gamma$ defined by $\iota_t(j)= \omega^{tj}$,
where  $\omega$ denotes the root of unity $e^{\frac{2\pi i}{k}}$. For a $\Gamma$-symmetric framework $(\tilde G,p)$,
an infinitesimal motion $u:\tilde V\rightarrow \mathbb{R}^d$ of $(\tilde G,p)$ is called \emph{$\iota_t$-symmetric} if it satisfies
\begin{equation*}
\tau(\gamma)u_i= \omega^{t \gamma} u_{\gamma i}\qquad \text{ for all } \gamma\in \Gamma \text{ and all } i\in \tilde V.
\end{equation*}
A $\Gamma$-symmetric framework $(\tilde G,p)$ is called \emph{$\iota_t$-symmetric infinitesimally rigid} if  every $\iota_t$-symmetric infinitesimal motion of $(\tilde G,p)$ is trivial.

\begin{thm} \label{thm:charcsc2} \cite{ST} A $\Gamma$-generic framework $(\tilde G,p)$ (with respect to $\theta$ and $\tau$)  is infinitesimally rigid if and only if it is $\iota_t$-symmetric infinitesimally rigid for every irreducible representation $\iota_t$ of $\Gamma$.
\end{thm}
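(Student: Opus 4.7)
The strategy is standard representation-theoretic block-diagonalisation. First, consider the linear $\Gamma$-action on the space $(\mathbb{C}^d)^{\tilde V}$ of complex-valued velocity assignments defined by $(\gamma\cdot u)_i=\tau(\gamma)\,u_{\gamma^{-1}i}$. Using \eqref{eq:symmetric_func} one checks that $R(\tilde G,p)$ intertwines this action with the natural $\Gamma$-permutation action on its rows by edge-orbits, so the (complex) kernel $M$ of $R(\tilde G,p)$---the space of all complex infinitesimal motions of $(\tilde G,p)$---is a $\Gamma$-invariant subspace. Similarly, if $u_i=Sp_i+t$ is trivial then $(\gamma\cdot u)_i=\tau(\gamma)S\tau(\gamma)^{-1}p_i+\tau(\gamma)t$, and $\tau(\gamma)S\tau(\gamma)^{-1}$ is again skew-symmetric since $\tau(\gamma)\in O(\mathbb{R}^d)$; hence the space $T\subseteq M$ of trivial infinitesimal motions is also $\Gamma$-invariant.

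Next, since $\Gamma=\mathbb{Z}_k$ is abelian, all of its irreducible complex representations are one-dimensional and exhausted by the characters $\iota_t$, $t=0,1,\dots,k-1$. By Maschke's theorem any complex $\Gamma$-module decomposes canonically as the direct sum of its $\iota_t$-isotypic components, with the projection onto the $t$-th component given by the character projection
\[
P_t \;=\; \frac{1}{k}\sum_{\gamma\in\Gamma}\overline{\iota_t(\gamma)}\,\gamma.
\]
A short computation unwinding the condition $\gamma\cdot u=\iota_t(\gamma)u$ (for all $\gamma\in\Gamma$) shows that the $\iota_t$-isotypic subspace of $(\mathbb{C}^d)^{\tilde V}$ coincides exactly with the space of $\iota_t$-symmetric assignments in the sense of Section~\ref{subsec:inc}.

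Applying the isotypic decomposition to the $\Gamma$-invariant subspaces $M$ and $T$ then yields $M=\bigoplus_{t=0}^{k-1}M_t$ and $T=\bigoplus_{t=0}^{k-1}T_t$, where $M_t$ and $T_t$ are respectively the spaces of $\iota_t$-symmetric infinitesimal motions and of $\iota_t$-symmetric trivial motions of $(\tilde G,p)$. Because $R(\tilde G,p)$ has real entries its complex and real ranks agree, so (real) infinitesimal rigidity of $(\tilde G,p)$ is equivalent to the equality $M=T$; by the uniqueness of the isotypic decomposition, this in turn is equivalent to $M_t=T_t$ for every $t$, i.e.\ to $\iota_t$-symmetric infinitesimal rigidity for each irreducible representation $\iota_t$ of $\Gamma$.

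The main technical step is the identification in the second paragraph of the $\iota_t$-isotypic component with the space of $\iota_t$-symmetric motions, together with the verification that $P_t$ commutes with $R(\tilde G,p)$ (so that $P_t$ carries $M$ to $M$ and $T$ to $T$). The $\Gamma$-genericity assumption does not seem to enter the argument directly; it appears to be inherited from the broader setting of the paper, where combinatorial/matroidal characterisations of $\iota_t$-symmetric rigidity are subsequently invoked.
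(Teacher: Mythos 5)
The paper does not prove this theorem; it imports it from \cite{ST}, where it is established via exactly the block-diagonalisation of the rigidity matrix you describe. Your argument is correct and matches the standard approach in that reference. A few remarks worth making explicit if you were to write this out fully: the invariance of the kernel $M$ under the action $(\gamma\cdot u)_i=\tau(\gamma)\,u_{\gamma^{-1}i}$ uses both that $\tau(\gamma)\in O(\mathbb{R}^d)$ (so it preserves inner products) and that $\gamma$ permutes the edge set; your verification that $\tau(\gamma)S\tau(\gamma)^{-1}$ is again skew-symmetric is the right reason $T$ is invariant. The identification of the $\iota_t$-isotypic component with the $\iota_t$-symmetric motions follows because unwinding $\gamma\cdot u=\iota_t(\gamma)u$ gives $\tau(\gamma)u_j=\iota_t(\gamma)u_{\gamma j}$ after re-indexing $j=\gamma^{-1}i$, which is precisely the definition in Section~\ref{subsec:inc}. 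Finally, the reduction from real to complex is fine because $M_{\mathbb{C}}=M_{\mathbb{R}}\otimes\mathbb{C}$ and $T_{\mathbb{C}}=T_{\mathbb{R}}\otimes\mathbb{C}$, so equality over $\mathbb{R}$ is equivalent to equality over $\mathbb{C}$, and since $T_t\subseteq M_t$ the condition ``every $\iota_t$-symmetric motion is trivial'' is literally $M_t=T_t$. You are also right that $\Gamma$-genericity is not used: the theorem holds for any $\Gamma$-symmetric framework, and the hypothesis is carried along only because the surrounding results require it.
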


Note that $\iota_0$ is the trivial irreducible representation of $\Gamma$ which assigns $1$ to each $\gamma\in \Gamma$. Therefore, a framework is $\iota_0$-symmetric infinitesimally rigid
if and only if it is $\Gamma$-symmetric infinitesimally rigid.

For a $\Gamma$-symmetric framework $(\tilde G,p)$, an orbit rigidity matrix $O_t(\tilde G,p)$ was introduced in  \cite{ST} for each irreducible representation $\iota_t$ of $\Gamma$. (For $ t=0$, the matrix $O_0(\tilde G,p)$ is the orbit rigidity matrix discussed in Section~\ref{subsec:forced}). Analogous to the case $t=0$, the matrix $O_t(\tilde G,p)$  completely describes the $\iota_t$-symmetric infinitesimal rigidity properties of $(\tilde G,p)$ for each $t$. In other words, $O_t(\tilde G,p)$ is equivalent to the block matrix corresponding to $\iota_t$ in the block-diagonalised rigidity matrix of $(\tilde G,p)$.
We define the \emph{$\iota_t$-symmetric rigidity matroid} of $(G,\psi)$, $\mathcal{R}_\tau^{t}(G,\psi)$, to be the row matroid of $O_t(\tilde G,p)$ for a $\Gamma$-generic realisation of $G$ (with respect to $\theta$ and $\tau$).

As shown in \cite{ST},  a $\Gamma$-symmetric framework in the plane with $\Gamma=\mathbb{Z}_2$ has a $1$-dimensional space of trivial $\iota_0$-symmetric infinitesimal motions, and a $2$-dimensional space of trivial $\iota_1$-symmetric infinitesimal motions. Moreover, a $\Gamma$-symmetric framework in the plane with $\Gamma=\mathbb{Z}_k$, where $k\geq 3$, has a $1$-dimensional space of trivial $\iota_t$-symmetric infinitesimal motions for $t=0,1$ and $k-1$, and no trivial $\iota_t$-symmetric infinitesimal motion for $t\neq 0,1,k-1$. This gives rise to the following characterisations of the $\iota_t$-symmetric rigidity matroids.

Given a quotient $\Gamma$-gain graph $(G,\psi)$, where $\Gamma=\mathbb{Z}_2$, let \(\mu\) be the function on  \(E\) defined  by \(\mu(X)=2|V(X)|-3+\beta_1(X)\) for \(X\subseteq E\) where
\[\beta_1(X)= \begin{cases}
	 0 \hbox{ if X is balanced;}\\
	 1 \hbox{ otherwise.}
\end{cases}\]

\begin{thm} \label{thm:antisymlaman} \cite{ST} Let $(\tilde G,p)$ be a $\Gamma$-generic framework (with respect to $\theta$ and $\tau$) such that $\tau(\Gamma)$ is  $\mathcal{C}_s$ or $\mathcal{C}_2$. Further, let $(G,\psi)$ be the quotient $\Gamma$-gain graph of $\tilde G$ with \(G=(V,E)\), and let $E'\subseteq E$.
Then $E'$ is independent in $\mathcal{R}_{\tau}^1(G,\psi)$ if and only if $\mu(F)\geq
|F|$ for all $\emptyset\neq F\subseteq E'$. Further, $(\tilde G,p)$ is $\iota_1$-symmetric infinitesimally rigid if and only if $(G,\psi)$ contains a spanning independent set of $2|V|-2$ edges.
\end{thm}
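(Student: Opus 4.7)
The plan is to identify $\mathcal{R}_\tau^1(G,\psi)$ with the count matroid $\mathcal{M}_\mu$ on $E$ whose independent sets are exactly the $\mu$-sparse edge subsets, and then to deduce the rigidity statement from this matroidal characterisation. First I would verify that $\mu$ is the rank function of a matroid on $E$. This is a standard computation in the Zaslavsky--Whiteley theory of frame matroids specialised to $\mathbb{Z}_2$-gain graphs, using that $\beta_1$ is monotone under edge union and that the union of two balanced connected edge sets sharing a vertex is balanced.

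\emph{Necessity.} Let $E'$ be independent in $\mathcal{R}_\tau^1$ and pick $\emptyset \ne F \subseteq E'$. The submatrix of $O_1(\tilde G,p)$ on rows indexed by $F$ and columns indexed by $V(F)$ has full row rank, so $|F| \le 2|V(F)| - \dim K_F$, where $K_F$ denotes its kernel. On each connected unbalanced component of $F$, the restriction of the $2$-dimensional space of trivial $\iota_1$-symmetric motions contributes two kernel directions. On each connected balanced component $C$, however, the covering graph over $C$ splits into two disjoint isomorphic copies, so the corresponding orbit rigidity block lifts to the ordinary rigidity matrix of a single copy; any rigid motion of that copy (with the second copy's motion determined by the $\iota_1$-symmetry condition) lies in the kernel and contributes three linearly independent directions. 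Summing over components, and using that the global trivial space is only $2$-dimensional so at most one ``extra'' unit is recovered when something unbalanced is present, yields $|F| \le 2|V(F)| - 3 + \beta_1(F) = \mu(F)$.

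\emph{Sufficiency.} I would proceed by induction on $|V|$ using an inverse Henneberg-type construction for $\mu$-tight $\mathbb{Z}_2$-gain graphs. The combinatorial ingredient is to show that every $(G,\psi)$ with $|E| = 2|V|-2$ and $\mu(F) \ge |F|$ for all $\emptyset \ne F \subseteq E$ admits at least one of the following reductions: removing a degree-$2$ vertex (inverse $0$-extension), splitting off a degree-$3$ vertex by introducing a new edge between two of its neighbours with compatible gain (inverse $1$-extension), or deleting an unbalanced loop at a leaf. The base case is a single vertex carrying an unbalanced loop, for which $O_1$ trivially has rank $0 = 2 \cdot 1 - 2$. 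The rigidity ingredient is to show that each reverse operation preserves generic $\iota_1$-independence of $O_1$: for the $0$-extension and the loop addition this is immediate from a row-rank increment, while the $1$-extension requires a Whiteley-type shift argument transferred to $O_1$, namely placing the new vertex generically on the line through the endpoints of the removed edge and row-reducing the resulting $2 \times 2$ block.

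The main obstacle will be the inductive step: the gain labels on the new edges must be chosen to simultaneously preserve $\mu$-sparsity and to ensure that the corresponding $2\times 2$ block of $O_1$ is invertible at a $\Gamma$-generic realisation. This forces a case analysis depending on whether the affected neighbourhood is balanced or unbalanced and on whether the new edge creates a balanced cycle; in the unbalanced $1$-extension case an additional trick of alternating gain assignments between the two candidate new edges is typically needed to exclude a forced singularity. The second assertion of the theorem then follows immediately, since $(\tilde G,p)$ is $\iota_1$-symmetric infinitesimally rigid if and only if $O_1(\tilde G,p)$ has rank $2|V|-2$, which by the first part is equivalent to $(G,\psi)$ containing a $\mu$-independent subset of size $2|V|-2$.
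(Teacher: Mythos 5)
The paper itself does not prove this theorem: it is quoted verbatim from Schulze--Tanigawa \cite{ST}, so there is no in-paper argument to compare against. Assessing your sketch on its own merits: the \emph{necessity} direction is sound in spirit and matches the standard argument. For a connected balanced $F$ the cover of $F$ consists of two disjoint isomorphic copies, and the restriction of $O_1$ to $F$ is equivalent to the ordinary rigidity matrix of one copy, so at a $\Gamma$-generic point its kernel contains the $3$-dimensional space of rigid motions of that copy; for a connected unbalanced $F$ the lift is connected, and the $2$-dimensional space of trivial $\iota_1$-symmetric motions (both for $\mathcal{C}_s$ and $\mathcal{C}_2$) restricts injectively. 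Summing over the vertex-disjoint components and observing $\beta_1(F)\leq 1$ gives the bound $|F|\leq\mu(F)$ exactly as you state. The reduction of the second assertion to the first (rank of $O_1$ equals $2|V|-2$ iff a $\mu$-independent set of that size exists, and any such set automatically spans) is also correct and routine.

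The \emph{sufficiency} sketch, however, contains a genuine error. You propose ``deleting an unbalanced loop at a leaf'' as one of your Henneberg reductions and take a single vertex carrying an unbalanced loop as the base case. In the matroid $\mathcal{R}_\tau^1(G,\psi)$ for $\Gamma=\mathbb{Z}_2$, an unbalanced loop $e$ has $\mu(\{e\})=2\cdot 1-3+1=0<1$, so $\{e\}$ violates the sparsity count and is always dependent. Consequently no $\mu$-sparse (in particular no $\mu$-tight) gain graph contains a loop: your base graph is not $\mu$-sparse, and the proposed loop-removal move is never applicable. You appear to have imported intuition from the forced-symmetric matroid $\mathcal{R}_\tau$ of Theorem~\ref{thm:symlaman}, where the function $\rho$ assigns an unbalanced cyclic loop the value $2-3+2=1$, so loops can be independent and loop-extensions are legitimate reductions. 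For $\mathcal{R}_\tau^1$ the base case must instead be the edgeless one-vertex graph (or the two-vertex graph with two parallel edges of distinct gains), and the family of admissible reductions has to be re-derived without any loop operation; this changes the combinatorial case analysis of the inductive step (in particular the gain-compatibility constraints in the $1$-extension) in an essential way. Alternatively, as the paper points out below the theorem, one can sidestep the Henneberg induction altogether by realising $\mathcal{R}_\tau^1$ as the Dilworth truncation of the union of the graphic and frame matroids of $(G,\psi)$ and invoking Nash-Williams/Edmonds, which also immediately yields Theorem~\ref{lem:2,3,2}.
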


By Theorem~\ref{thm:charcsc2}, Theorems~\ref{thm:symlaman} and \ref{thm:antisymlaman} provide a combinatorial characterisation of $\Gamma$-generic infinitesimally rigid frameworks for $\Gamma=\mathbb{Z}_2$.

Note that the matroid $\mathcal{R}_\tau^{1}(G,\psi)$ is the Dilworth truncation of the union of the graphic matroid and the frame matroid (or bias matroid) of $(G,\psi)$. We have the following formula for the rank function of $\mathcal{R}_\tau^{1}(G,\psi)$.

\begin{thm} \cite{it} \label{lem:2,3,2} Let $(\tilde G,p)$ be a $\Gamma$-symmetric framework (with respect to $\theta$ and $\tau$) such that $\tau(\Gamma)$ is  $\mathcal{C}_s$ or $\mathcal{C}_2$.
Further let \((G,\psi)\) be the quotient \(\Gamma\)-gain graph of $\tilde G$ with \(G=(V,E)\).
The rank of a set \(E'\subseteq E\) in $\mathcal{R}_\tau^{1}(G,\psi)$ is equal to
$$\min \left\{ \sum_{i=1}^s \mu(E_i)\colon \{E_1,\dots, E_s\}\ \hbox{ is a partition of } E' \right\}.$$
\end{thm}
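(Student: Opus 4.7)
The plan is to prove the formula by matching upper and lower bounds. The upper bound is the easy direction: for any partition $\{E_1,\dots,E_s\}$ of $E'$, submodularity of the matroid rank function yields $r(E')\le\sum_{i=1}^s r(E_i)$, and Theorem~\ref{thm:antisymlaman} gives $r(E_i)\le\mu(E_i)$, since for any independent $I\subseteq E_i$ we have $|I|\le\mu(I)\le\mu(E_i)$ (using the monotonicity of $|V(\cdot)|$ and the fact that if $I$ is unbalanced then so is $E_i$). Hence $r(E')\le\min\sum_i\mu(E_i)$.

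For the lower bound I would exploit the observation, recorded just before the theorem, that $\mathcal{R}_\tau^{1}(G,\psi)$ is the Dilworth truncation of the matroid union $M_g\vee M_f$, where $M_g$ is the graphic matroid of $G$ and $M_f$ is the frame (bias) matroid of $(G,\psi)$. Recall that $r_{M_g}(X)=|V(X)|-c(X)$ and $r_{M_f}(X)=|V(X)|-b(X)$, where $c(X)$ and $b(X)$ are, respectively, the number of connected components and the number of balanced connected components of $X$. Using Edmonds' rank formula for matroid unions and checking the minimum, one verifies
\[ r_{M_g\vee M_f}(X)=2|V(X)|-c(X)-b(X), \]
which specialises on a connected $X$ to $2|V(X)|-2+\beta_1(X)=\mu(X)+1$.

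Next I would apply the standard Dilworth truncation identity
\[ r_{\mathrm{Dil}(M)}(E')=\min\Bigl\{\sum_{i=1}^{s}\bigl(r_M(E_i)-1\bigr):\{E_1,\dots,E_s\}\text{ is a partition of }E'\Bigr\} \]
to $M=M_g\vee M_f$. When each $E_i$ is connected, the previous step gives $r_M(E_i)-1=\mu(E_i)$, so the Dilworth value equals $\sum_i\mu(E_i)$. A short calculation comparing $\mu(C_1\cup C_2)$ with $\mu(C_1)+\mu(C_2)$ (the difference works out to $3+\beta_1(C_1\cup C_2)-\beta_1(C_1)-\beta_1(C_2)$, always strictly positive) shows that refining a disconnected part into its connected components strictly decreases $\sum_i\mu(E_i)$; the same refinement argument applies to the Dilworth side (with a difference of $k-1\ge 0$ from $r_{M_g\vee M_f}$ being additive over components). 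So both minima are attained on partitions into connected parts and therefore coincide.

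The main obstacle I anticipate is the identification of $r_{M_g\vee M_f}$ via Edmonds' theorem: one needs to produce, for each subset $X$, an explicit decomposition into a forest (independent in $M_g$) and a disjoint set consisting of a spanning forest together with one edge completing an unbalanced cycle in each unbalanced component (independent in $M_f$), and then verify that Edmonds' minimum $\min_{A\subseteq X}(|X\setminus A|+r_{M_g}(A)+r_{M_f}(A))$ is not smaller than the claimed value $2|V(X)|-c(X)-b(X)$, which is where the real combinatorial content lies.
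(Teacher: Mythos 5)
First, note that the paper does not prove this statement: Theorem~\ref{lem:2,3,2} is cited from Ikeshita and Tanigawa \cite{it}, so there is no ``paper's own proof'' to compare against; your argument is a proposed proof of that cited result.

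Your upper bound via submodularity of the rank function together with Theorem~\ref{thm:antisymlaman} is fine. The lower bound, however, rests on the claimed identity
$r_{M_g\vee M_f}(X)=2|V(X)|-c(X)-b(X)$, and this is false. Already a single non-loop edge $e$ (with trivial gain, say) gives a counterexample: $2|V(\{e\})|-c(\{e\})-b(\{e\})=4-1-1=2$, while the matroid union has $r_{M_g\vee M_f}(\{e\})=1$, since any pair of disjoint independent sets $I_1\subseteq M_g$, $I_2\subseteq M_f$ inside $\{e\}$ has $|I_1|+|I_2|\le 1$. (A larger example: take $X=E(K_5)$ plus one pendant edge with all gains trivial; then $M_f$ coincides with $M_g$ on this balanced set, two disjoint forests of $K_5$ cover at most $8$ edges, so the union rank is $9$, whereas your formula gives $10$. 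Here the Edmonds minimum is attained at $A=E(K_5)$, not at $A=X$.) The point is precisely that Edmonds' rank is a genuine minimum over subsets and does not collapse to the closed-form count $2|V|-c-b$; it only becomes a count matroid \emph{after} the truncation step. So the obstacle you flag in your last paragraph is not merely a calculation to be carried out --- the intermediate equality you hope to verify is simply untrue.

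The Dilworth step as you state it, $r_{\mathrm{Dil}(M)}(E')=\min\{\sum_i(r_M(E_i)-1)\}$ over partitions, compounds the problem: applied to $M=M_g\vee M_f$ and a partition of $E'$ into singletons it would force $r_{\mathrm{Dil}(M)}(E')=0$, whereas a singleton non-loop edge is independent in $\mathcal{R}_\tau^1(G,\psi)$. The remark preceding the theorem (that $\mathcal{R}_\tau^1$ is ``the Dilworth truncation of $M_g\vee M_f$'') is informal; the clean route, which is the one taken in \cite{it}, is to verify directly that $\mu(X)=2|V(X)|-3+\beta_1(X)$ is a non-decreasing, intersecting submodular function with $\mu(\{e\})\ge 1$ on non-loop edges, and to invoke the general theory of matroids induced by such set functions, which yields both the independence characterisation of Theorem~\ref{thm:antisymlaman} and the stated rank formula simultaneously. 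If you want to keep the matroid-union framing you would have to work with Edmonds' minimum formula itself (or, equivalently, with the rank expressed as a minimum over subpartitions) rather than with a closed-form substitute for $r_{M_g\vee M_f}$.
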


It was shown in \cite{ST} that for the three-fold rotational group $\tau(\Gamma)=\mathcal{C}_3$, a $\Gamma$-generic framework is  $\Gamma$-symmetric infinitesimally rigid if and only if it is infinitesimally rigid.

The only other groups for which combinatorial characterisations of $\Gamma$-generic infinitesimally rigid frameworks have been found are the cyclic groups of order $k$, where $k<1000$ is odd \cite{ithesis}. To state the result, we need the following definition.

A \emph{split} of a vertex \(v\) of a quotient $\Gamma$-gain graph $(G,\psi)$ is defined as follows. We can assume that every edge incident with \(v\) is directed from \(v\). Take a 2-partition \({E_1,E_2}\) of non-loop edges incident with \(v\). Replace \(v\) with a pair of vertices \(v_1,v_2\). Replace every edge \(vu\in E_i\) with the edge \(v_iu\) of the same label for \(i=1,2\). Then replace every  (necessarily unbalanced) loop  incident with \(v\) with an arc \(v_1v_2\) of the same label. We say that a connected set \(F\) is \emph{near-balanced} if it is not balanced and there is a split of \((G,\psi)\) in which \(F\) results in a balanced set.

Given a quotient $\Gamma$-gain graph $(G,\psi)$, where $\Gamma=\mathbb{Z}_k$, and an irreducible representation $\iota_t$ of $\Gamma$, let \(\nu_t\) be the function on  \(E\) defined  by \(\nu_t(X)=2|V(X)|-3+\alpha_t(X)\) for \(X\subseteq E\) where
\[\alpha_t(X)= \begin{cases}
	 0  \quad \hbox{  if X is balanced;}\\
	 2 \quad \hbox{ if X is near-balanced or satisfies $\langle X \rangle_{\psi,i}\simeq \mathbb{Z}_l$  for some}\\ \quad \quad \hbox { $2\leq l\leq k;\, t\equiv 0 \textrm{ or }1 \textrm{ or } -1 \pmod{l}$;}\\
 3 \quad\hbox{ otherwise.}
\end{cases}\]

\begin{thm} \label{thm:antisymlamancyc} \cite{it} Let $(\tilde G,p)$ be a $\Gamma$-generic framework (with respect to $\theta$ and $\tau$) such that $\tau(\Gamma)$ is  $\mathcal{C}_k$, \(5\leq k<1000\) odd. Further, let $(G,\psi)$ be the quotient $\Gamma$-gain graph of $\tilde G$ with \(G=(V,E)\), and let $E'\subseteq E$.
\begin{enumerate}
  \item[(i)] Then $E'$ is independent in $\mathcal{R}_{\tau}^t(G,\psi)$ if and only if $\nu_t(F)\geq|F|$ for all $\emptyset\neq F\subseteq E'$.
  \item[(ii)] Further, $(\tilde G,p)$ is $\iota_t$-symmetric infinitesimally rigid if and only if $(G,\psi)$ contains a spanning independent set of $2|V|-1$ edges if $t= 0,1,k-1$, and a spanning independent set of $2|V|$ edges otherwise.
  \item[(iii)] The rank of \(E'\) in the matroid $\mathcal{R}_\tau^{t}(G,\psi)$ is equal to
$$\min\left\{\sum_{i=1}^{s}\nu_t(E_i)\colon \{E_1,\dots, E_s\}\ \hbox{ is a partition of } E' \right\}.$$
\end{enumerate}
\end{thm}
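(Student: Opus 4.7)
The plan is to establish parts~(i), (ii), and~(iii) in parallel by exploiting the $\iota_t$-orbit rigidity matrix $O_t(\tilde G,p)$ from \cite{ST}, following the template used in the proofs of Theorems~\ref{thm:symlaman} and~\ref{thm:antisymlaman}. Write $r_t$ for the rank function of $\mathcal{R}_\tau^t(G,\psi)$. The first step is to verify that, with the convention $\nu_t(\emptyset)=0$, the function $\nu_t$ is monotone and intersecting-submodular on $2^E$. Monotonicity is immediate from the $2|V(X)|$ term, so the real work lies in checking submodularity of $\alpha_t$. This reduces to a case analysis on the induced subgroups $\langle X\rangle_\psi$, $\langle Y\rangle_\psi$, and $\langle X\cup Y\rangle_\psi$ whenever $X,Y$ are connected and share a vertex, combined with a compatibility check for the near-balancedness witness, namely that a simultaneous vertex split can be found when one is needed on each side. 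The hypothesis that $k$ is odd enters here: it rules out index-two subgroups in $\mathbb{Z}_k$ which would otherwise obstruct certain merge steps.

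For the necessity direction of~(i), my plan is to exhibit, for every non-empty $F\subseteq E$, a space of $\iota_t$-symmetric infinitesimal motions of the covering subframework on $V(F)$ of dimension $3-\alpha_t(F)$ that vanish on every edge of $F$. When $F$ is balanced, its lift decomposes into $k$ congruent copies, and the three trivial velocity fields (two translations and one rotation about the origin) can each be twisted by the powers $\omega^{tj}$ across these copies to produce three $\iota_t$-symmetric trivial motions. When $F$ is near-balanced, a single such twisted motion survives the associated vertex split. When $\langle F\rangle_\psi\simeq \mathbb{Z}_\ell$ with $t\equiv 0,\pm 1\pmod \ell$, an $\iota_t$-symmetric rotation or translation of the $\mathbb{Z}_\ell$-quotient restricts to a trivial motion on $V(F)$. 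In every remaining case no such motion exists. These three cases account for the $\alpha_t$-correction term and yield $r_t(F)\leq \nu_t(F)$.

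Sufficiency in~(i) would proceed by induction on $|V|+|E|$ using a Henneberg-type inductive construction on gain graphs. The aim is to show that any $(G,\psi)$ attaining $|E|=\nu_t(E)$ and satisfying the hereditary count condition admits a vertex whose removal, via an inverse $0$-extension, inverse $1$-extension, or a gain-preserving vertex split in the near-balanced cases, yields a smaller gain graph still satisfying the count. Each such operation has a counterpart on $O_t$ that preserves row-independence. The hardest step lies here: one must prove that an admissible vertex or split always exists, and that the associated operation on $O_t$ preserves independence. The subgroup arithmetic underpinning this admissibility argument is exactly where the hypothesis $5\le k<1000$ odd is essential, reflecting the finite case analysis carried out in \cite{it}.

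Part~(iii) then follows from~(i) by the standard Edmonds-type rank formula for matroids defined by monotone intersecting-submodular functions: $r_t(E')=\min\{\sum_i\nu_t(E_i):\{E_i\}\text{ is a partition of }E'\}$. Part~(ii) is an immediate consequence of~(iii) applied with $E'=E$ together with the known dimension of the space of trivial $\iota_t$-symmetric motions in the plane computed in \cite{ST}, namely~$1$ for $t\in\{0,1,k-1\}$ and~$0$ otherwise; equating the full $O_t$-rank with $2|V|$ minus this trivial-motion count yields the stated bounds $2|V|-1$ and $2|V|$ in the two cases.
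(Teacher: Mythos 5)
This theorem is quoted in the paper from Ikeshita and Tanigawa \cite{it} without any accompanying proof; it is used as a black-box input for Theorem~\ref{thm:incidentalc3g}. There is therefore no ``paper's own proof'' to compare your attempt against, and your proposal cannot be validated or refuted by this source.

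That said, your sketch does follow the architecture one expects for a count-matroid characterisation of this kind, and it is consistent with the general strategy of \cite{it}: prove necessity of the count in~(i) by constructing the appropriate space of trivial $\iota_t$-symmetric motions on balanced, near-balanced, and cyclic-subgroup pieces; prove sufficiency by a Henneberg-type induction on gain graphs with gain-preserving $0$- and $1$-extensions and vertex splits; deduce~(iii) from intersecting submodularity of $\nu_t$ via an Edmonds/Dilworth-truncation rank formula; and then obtain~(ii) from~(iii) together with the trivial-motion dimension count from \cite{ST}. The genuine gaps in your write-up are exactly where you say they are: the verification that $\alpha_t$ is intersecting submodular (which involves reconciling near-balancedness witnesses across a union of two sets), and the admissibility of a reduction step that preserves row-independence of $O_t$. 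Both are substantial and are precisely where the hypothesis $5\le k<1000$ odd is used in \cite{it} to reduce the subgroup arithmetic to a finite check; neither can be waved through. As a plan it is reasonable, but as written it is a proof outline whose hardest claims are asserted rather than proved, and nothing in the present paper lets one certify those claims.
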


\section{Sufficient connectivity conditions for rigidity in term of covering graphs}\label{sec:coveringsufficient}
In this section we give sufficient conditions for symmetric rigidity (for both forced symmetric and incidentally symmetric frameworks) in which the sufficiency is ensured by the high (mixed-) connectivity of the covering graph. Moreover, we give examples that show that all of our results are sharp.

Following \cite{jj6}, we say that a graph \(\tilde G=(\tilde V, \tilde E)\)  is \emph{$n$-mixed-connected} if \(\tilde G-W-F\) is connected for all sets \(W\subseteq \tilde V\) and \(F\subseteq \tilde E\) which satisfy \(2|W|+|F|\leq n-1\). Note that for $n=6$, for example, \(\tilde G\) is 6-mixed-connected if and only if \(\tilde G\) is 6-edge-connected, \(\tilde G-v\) is 4-edge-connected and \(\tilde G-v-u\) is 2-edge-connected for every \(v,u\in \tilde V\).

It turns out that in some cases our sufficient conditions cannot be given purely in terms of the mixed-connectivity of the covering graph, but they also require a certain type of connectivity for the corresponding quotient $\Gamma$-gain graph. It is therefore natural to also derive sufficient conditions for symmetric rigidity purely in terms of the connectivity of the quotient $\Gamma$-gain graphs. This is done in Section \ref{sec:suffforced}, based on an appropriate notion of mixed-connectivity for quotient $\Gamma$-gain graphs introduced in Section~\ref{sec:gainmixed}.  In fact, all the sufficient conditions for symmetric rigidity given in this section  are corollaries of the results in Sections \ref{sec:gainmixed} and \ref{sec:suffforced}.

\subsection{Forced symmetric rigidity}
In this subsection our main result is the following:

\begin{thm}\label{thm:forcedmain}
Let $(\tilde G,p)$ be a $\Gamma$-generic framework (with respect to $\theta$ and $\tau$) such that $\tau(\Gamma)$ is  $\mathcal{C}_s$, $\mathcal{C}_k$ or $\mathcal{C}_{(2k+1)v}$,
and let $(G,\psi)$ be the quotient $\Gamma$-gain graph of $\tilde G$. Suppose \(\tilde G\) is 6-mixed-connected. If \(|\Gamma|\geq6\) then suppose further that \((G,\psi)\) is
2-edge-connected. Then \((\tilde G,p)\) is $\Gamma$-symmetric infinitesimally rigid.
\end{thm}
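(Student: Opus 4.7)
My plan is to reduce the theorem to a combinatorial counting argument on the quotient gain graph $(G,\psi)$ and then translate any hypothetical failure of the count into a small mixed cut of the covering graph $\tilde G$. By Theorem~\ref{thm:symlaman}, it suffices to show that $(G,\psi)$ admits a spanning independent set of $2|V|-1$ (cyclic case) or $2|V|$ (dihedral case) edges in $\mathcal{R}_\tau(G,\psi)$; and by Theorem~\ref{thm:symrank}, this is equivalent to verifying
\begin{equation*}
\sum_{i=1}^s \rho(E_i) \;\geq\; 2|V|-1 \quad \text{(resp.\ } 2|V|\text{)}
\end{equation*}
for every partition $\{E_1,\ldots,E_s\}$ of $E$. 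A short case analysis on the definition of $\beta$ shows that replacing any disconnected $E_i$ by its connected components never increases $\sum \rho(E_i)$ (the $-3(k-1)$ gain from splitting into $k$ components always outweighs the $\sum_j\beta(C_j)-\beta(E_i)$ correction), so we may assume each $E_i$ is connected.

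Suppose for contradiction such a deficient partition exists, and for each piece write $h_i := |\langle E_i\rangle|$. Then $c^{-1}(E_i)$ decomposes into $|\Gamma|/h_i$ isomorphic connected pieces of $\tilde G$, each on $h_i|V(E_i)|$ vertices. The three values $\beta(E_i)\in\{0,2,3\}$ correspond respectively to $h_i=1$, a nontrivial cyclic $\langle E_i\rangle$, and a non-cyclic $\langle E_i\rangle$ (which, in the cases at hand, must be the full group). Rewriting the deficit as
\begin{equation*}
2\sum_i |V(E_i)| \;-\; 3s \;+\; \sum_i \beta(E_i) \;\leq\; 2|V|-2 \quad (\text{or } 2|V|-1),
\end{equation*}
and using that $\sum_i |V(E_i)| - |V|$ measures the total overlap of the $V(E_i)$, I would follow the Lov\'asz--Yemini / Jackson--Jord\'an scheme: identify a vertex set $W\subseteq \tilde V$ consisting of the lifted shared orbit-vertices and an edge set $F\subseteq \tilde E$ of bridging edges between distinct lifted pieces, and show that $\tilde G - W - F$ is disconnected with $2|W|+|F|\leq 5$, contradicting the 6-mixed-connectedness of $\tilde G$. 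What makes the arithmetic balance is the trade-off that the cheap gain-graph deficit of a $\beta=2$ or $\beta=3$ piece is compensated by the fact that it has only $|\Gamma|/h_i$ (rather than $|\Gamma|$) cover components, so it contributes much more conservatively to the cut of $\tilde G$.

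The main obstacle is the case $|\Gamma|\geq 6$, which is exactly where the extra hypothesis that $(G,\psi)$ is 2-edge-connected is required. Without it, a small edge cut of $(G,\psi)$ that separates a balanced subgraph would lift to $|\Gamma|$ disjoint copies in $\tilde G$ linked by only a few edges, producing a mixed cut whose size is controlled by the cut in $(G,\psi)$ and not by the mixed-connectivity of $\tilde G$; 2-edge-connectedness rules out precisely this pathology. As indicated at the end of Section~\ref{sec:coveringsufficient}, the cleanest way to manage the bookkeeping --- and in particular the cyclic-versus-dihedral case split --- is to first prove the corresponding statement in terms of an appropriate mixed-connectivity of $(G,\psi)$ itself (this is the job of Section~\ref{sec:gainmixed}), and then deduce Theorem~\ref{thm:forcedmain} as a corollary by showing that 6-mixed-connectedness of $\tilde G$, together with 2-edge-connectedness of $(G,\psi)$ when $|\Gamma|\geq 6$, implies the required mixed-connectivity of $(G,\psi)$.
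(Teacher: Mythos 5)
Your proposal outlines essentially the same strategy as the paper: reduce via Theorems~\ref{thm:symlaman} and~\ref{thm:symrank} to a counting inequality over partitions of $E(G)$, lift a hypothetical deficient partition to a symmetric cover of $\tilde G$, and use the 6-mixed-connectivity (plus 2-edge-connectivity of $(G,\psi)$ when $|\Gamma|\geq 6$) to show the aggregated degree bound rules out the deficit. In particular, your closing paragraph identifies exactly the paper's two-step structure --- introduce a notion of gain-mixed-connectivity for $(G,\psi)$, prove the gain-graph version (Theorem~\ref{thm:forcedgainmain}, with Lemma~\ref{lem:coverlower} doing the arithmetic you gesture at), and then deduce Theorem~\ref{thm:forcedmain} from Lemma~\ref{lem:gainconn}(a); the only small slip is the parenthetical claim that a non-cyclic $\langle E_i\rangle$ must be the full group, which fails for $\mathcal{C}_{(2k+1)v}$ when $2k+1$ is composite, but this does not affect the argument.
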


Theorem \ref{thm:forcedmain} is an immediate corollary of  Lemma \ref{lem:gainconn}(a) and Theorem \ref{thm:forcedgainmain} which we will prove in the following sections.

The  examples in Figure~\ref{fig:2conn5conn} show that Theorem~\ref{thm:forcedmain} is best possible.

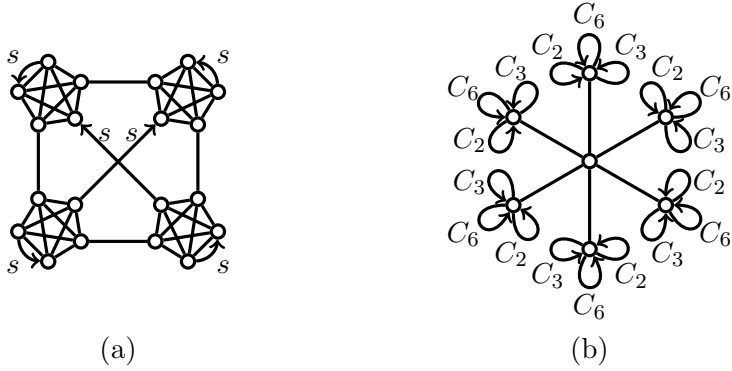
\begin{figure}[htp]
\begin{center}
\begin{tikzpicture}[very thick,scale=0.9]
\tikzstyle{every node}=[circle, draw=black, fill=white, inner sep=0pt, minimum width=5pt];

\node (p0) at (45:0.9cm) {};
\node (p1) at (135:0.9cm) {};
\node (p2) at (225:0.9cm) {};
\node (p3) at (315:0.9cm) {};

\node (p0r) at (25:1.3cm) {};
\node (p1r) at (115:1.3cm) {};
\node (p2r) at (205:1.3cm) {};
\node (p3r) at (295:1.3cm) {};

\node (p0l) at (65:1.3cm) {};
\node (p1l) at (155:1.3cm) {};
\node (p2l) at (245:1.3cm) {};
\node (p3l) at (335:1.3cm) {};

\node (p0rr) at (35:1.8cm) {};
\node (p1rr) at (125:1.8cm) {};
\node (p2rr) at (215:1.8cm) {};
\node (p3rr) at (305:1.8cm) {};

\node (p0ll) at (55:1.8cm) {};
\node (p1ll) at (145:1.8cm) {};
\node (p2ll) at (235:1.8cm) {};
\node (p3ll) at (325:1.8cm) {};

\draw(p0)--(p0r);
\draw(p0r)--(p0rr);
\draw(p0rr)--(p0ll);
\draw(p0ll)--(p0l);
\draw(p0l)--(p0);
\draw(p0)--(p0rr);
\draw(p0)--(p0ll);
\draw(p0r)--(p0ll);
\draw(p0r)--(p0l);
\draw(p0rr)--(p0l);

\draw(p1)--(p1r);
\draw(p1r)--(p1rr);
\draw(p1rr)--(p1ll);
\draw(p1ll)--(p1l);
\draw(p1l)--(p1);
\draw(p1)--(p1rr);
\draw(p1)--(p1ll);
\draw(p1r)--(p1ll);
\draw(p1r)--(p1l);
\draw(p1rr)--(p1l);

\draw(p2)--(p2r);
\draw(p2r)--(p2rr);
\draw(p2rr)--(p2ll);
\draw(p2ll)--(p2l);
\draw(p2l)--(p2);
\draw(p2)--(p2rr);
\draw(p2)--(p2ll);
\draw(p2r)--(p2ll);
\draw(p2r)--(p2l);
\draw(p2rr)--(p2l);

\draw(p3)--(p3r);
\draw(p3r)--(p3rr);
\draw(p3rr)--(p3ll);
\draw(p3ll)--(p3l);
\draw(p3l)--(p3);
\draw(p3)--(p3rr);
\draw(p3)--(p3ll);
\draw(p3r)--(p3ll);
\draw(p3r)--(p3l);
\draw(p3rr)--(p3l);

\draw(p0l)--(p1r);
\draw(p1l)--(p2r);
\draw(p2l)--(p3r);
\draw(p3l)--(p0r);

\path (p0rr) edge [->,bend right=42] (p0ll);
\path (p1rr) edge [->,bend right=42] (p1ll);
\path (p2rr) edge [->,bend right=42] (p2ll);
\path (p3rr) edge [->,bend right=42] (p3ll);

\node [draw=white, fill=white] (a) at (0.2,0.4)  {$s$};
\node [draw=white, fill=white] (a) at (-0.2,0.4)  {$s$};
\draw[<-](p0)--(p2);
\draw[<-](p1)--(p3);

\node [draw=white, fill=white] (a) at (45:2.2cm)  {$s$};
\node [draw=white, fill=white] (a) at (135:2.2cm)  {$s$};
\node [draw=white, fill=white] (a) at (225:2.2cm)  {$s$};
\node [draw=white, fill=white] (a) at (315:2.2cm)  {$s$};

\node [draw=white, fill=white] (a) at (0,-2.8)  {(a)};

\end{tikzpicture}
\hspace{2.5cm}
\begin{tikzpicture}[very thick,scale=0.9]
\tikzstyle{every node}=[circle, draw=black, fill=white, inner sep=0pt, minimum width=5pt];

\node (p0) at (0,0) {};
\node (p1) at (90:1.3cm) {};
\node (p2) at (150:1.3cm) {};
\node (p3) at (210:1.3cm) {};
\node (p4) at (270:1.3cm) {};
\node (p5) at (330:1.3cm) {};
\node (p6) at (30:1.3cm) {};

\draw(p0)--(p1);
\draw(p0)--(p2);
\draw(p0)--(p3);
\draw(p0)--(p4);
\draw(p0)--(p5);
\draw(p0)--(p6);

\node [draw=white, fill=white] (a) at (90:2.15cm)  {$C_6$};
\node [draw=white, fill=white] (a) at (70:1.8cm)  {$C_3$};
\node [draw=white, fill=white] (a) at (110:1.8cm)  {$C_2$};

\node [draw=white, fill=white] (a) at (150:2.15cm)  {$C_6$};
\node [draw=white, fill=white] (a) at (130:1.8cm)  {$C_3$};
\node [draw=white, fill=white] (a) at (170:1.8cm)  {$C_2$};

\node [draw=white, fill=white] (a) at (210:2.15cm)  {$C_6$};
\node [draw=white, fill=white] (a) at (190:1.8cm)  {$C_3$};
\node [draw=white, fill=white] (a) at (230:1.8cm)  {$C_2$};

\node [draw=white, fill=white] (a) at (270:2.15cm)  {$C_6$};
\node [draw=white, fill=white] (a) at (250:1.8cm)  {$C_3$};
\node [draw=white, fill=white] (a) at (290:1.8cm)  {$C_2$};

\node [draw=white, fill=white] (a) at (30:2.15cm)  {$C_6$};
\node [draw=white, fill=white] (a) at (10:1.8cm)  {$C_3$};
\node [draw=white, fill=white] (a) at (50:1.8cm)  {$C_2$};

\node [draw=white, fill=white] (a) at (330:2.15cm)  {$C_6$};
\node [draw=white, fill=white] (a) at (310:1.8cm)  {$C_3$};
\node [draw=white, fill=white] (a) at (350:1.8cm)  {$C_2$};

\draw[->] (p1) to [out=330,in=30,looseness=15] (p1);
\draw[->] (p1) to [out=60,in=120,looseness=15] (p1);
\draw[->] (p1) to [out=150,in=210,looseness=15] (p1);

\draw[->] (p2) to [out=30,in=90,looseness=15] (p2);
\draw[->] (p2) to [out=120,in=180,looseness=15] (p2);
\draw[->] (p2) to [out=210,in=270,looseness=15] (p2);

\draw[->] (p3) to [out=90,in=150,looseness=15] (p3);
\draw[->] (p3) to [out=180,in=240,looseness=15] (p3);
\draw[->] (p3) to [out=270,in=330,looseness=15] (p3);

\draw[->] (p4) to [out=150,in=210,looseness=15] (p4);
\draw[->] (p4) to [out=240,in=300,looseness=15] (p4);
\draw[->] (p4) to [out=330,in=30,looseness=15] (p4);

\draw[->] (p5) to [out=210,in=270,looseness=15] (p5);
\draw[->] (p5) to [out=300,in=0,looseness=15] (p5);
\draw[->] (p5) to [out=30,in=90,looseness=15] (p5);

\draw[->] (p6) to [out=270,in=330,looseness=15] (p6);
\draw[->] (p6) to [out=0,in=60,looseness=15] (p6);
\draw[->] (p6) to [out=90,in=150,looseness=15] (p6);
\node [draw=white, fill=white] (a) at (0,-2.8)  {(b)};
\end{tikzpicture}
\end{center}
\vspace{-0.4cm}
\caption{(a) An example of a $\mathbb{Z}_2$-gain graph (with $\mathbb{Z}_2=\langle s \rangle$) whose covering graph is $5$-mixed connected, but not $\mathbb{Z}_2$-symmetric infinitesimally rigid. (b) An example of a connected $\mathbb{Z}_6$-gain graph (with $\mathbb{Z}_6=\langle C_6\rangle $) whose covering graph is $6$-mixed connected, but not $\mathbb{Z}_6$-symmetric infinitesimally rigid. In both (a) and (b), the orientation and edge label is omitted for every edge with gain $\mathrm{id}$.
}
\label{fig:2conn5conn}\end{figure}

The covering graph $\tilde G$ of the graph $(G,\psi)$ in Figure~\ref{fig:2conn5conn}(a) is clearly 5-mixed-connected. Moreover, to see that $\tilde{G}$ is not $\mathbb{Z}_2$-symmetric infinitesimally rigid, consider the partition of the edge set of $(G,\psi)$ consisting of the four \emph{balanced} edge sets $E_1,\ldots, E_4$  of the four $K_5$ subgraphs and the ten remaining singletons $E_5,\ldots , E_{14}$. Since $\rho(E_i)=7$ for each $i=1,\ldots, 4$, we have $\sum_{i=1}^{14}\rho(E_i)= 38< 39=2|V|-1$. Thus, by Theorems~\ref{thm:symlaman} and \ref{thm:symrank},  $\tilde{G}$ is not $\mathbb{Z}_2$-symmetric infinitesimally rigid.

To see that the  covering graph $\tilde G$ of the graph $(G,\psi)$ in Figure~\ref{fig:2conn5conn}(b) is 6-mixed-connected, observe that $\tilde G$ may be obtained from the complete bipartite graph $K_{6,6}$ with partite sets $X$ and $Y$, where $X=\{x_1,\ldots , x_6\}$ is the orbit of the `central vertex' $x$ in $(G,\psi)$ under the $6$-fold rotation, and $Y=\{y_1,\ldots , y_6\}$ is the set of the six vertices in $(G,\psi)-x$, by replacing each vertex $y_j$ in $Y$ by a complete graph $K_6$ on the vertices $y_j^{(1)},\ldots , y_j^{(6)}$, and replacing each edge $x_iy_j$ by the edges $x_iy_j^{(1)}, \ldots, x_iy_j^{(6)}$. To see that $\tilde{G}$ is not $\mathbb{Z}_6$-symmetric infinitesimally rigid, consider the partition of the edge set of $(G,\psi)$ consisting of the six edge sets $E_1,\ldots, E_6$, where $E_i$ consists of the three loops induced by $y_i$, and the remaining six singletons $E_7,\ldots, E_{12}$. Since $\rho(E_i)=1$ for all $i=1,\ldots, 12$, we have $\sum_{i=1}^{12}\rho(E_i)= 12< 13=2|V|-1$. Thus, by Theorems~\ref{thm:symlaman} and \ref{thm:symrank},  $\tilde{G}$ is not $\mathbb{Z}_2$-symmetric infinitesimally rigid.

These examples may easily be adapted to obtain analogous examples for the other groups mentioned in Theorem~\ref{thm:forcedmain}. 
An example for the dihedral group $\mathcal{C}_{3v}$ is shown in Figure~\ref{fig:dihedral}.

\begin{figure}[htp]
\begin{center}
\begin{tikzpicture}[very thick,scale=0.9]
\tikzstyle{every node}=[circle, draw=black, fill=white, inner sep=0pt, minimum width=5pt];

\node (p0) at (45:0.9cm) {};
\node (p1) at (135:0.9cm) {};
\node (p2) at (225:0.9cm) {};
\node (p3) at (315:0.9cm) {};

\node (p0r) at (25:1.3cm) {};
\node (p1r) at (115:1.3cm) {};
\node (p2r) at (205:1.3cm) {};
\node (p3r) at (295:1.3cm) {};

\node (p0l) at (65:1.3cm) {};
\node (p1l) at (155:1.3cm) {};
\node (p2l) at (245:1.3cm) {};
\node (p3l) at (335:1.3cm) {};

\node (p0rr) at (35:1.8cm) {};
\node (p1rr) at (125:1.8cm) {};
\node (p2rr) at (215:1.8cm) {};
\node (p3rr) at (305:1.8cm) {};

\node (p0ll) at (55:1.8cm) {};
\node (p1ll) at (145:1.8cm) {};
\node (p2ll) at (235:1.8cm) {};
\node (p3ll) at (325:1.8cm) {};

\node [draw=white, fill=white] (a) at (45:2.2cm)  {$s$};
\node [draw=white, fill=white] (a) at (135:2.2cm)  {$s$};
\node [draw=white, fill=white] (a) at (225:2.4cm)  {$sC_3$};
\node [draw=white, fill=white] (a) at (315:2.4cm)  {$sC_3$};

\draw(p0)--(p0r);
\draw(p0r)--(p0rr);
\draw(p0rr)--(p0ll);
\draw(p0ll)--(p0l);
\draw(p0l)--(p0);
\draw(p0)--(p0rr);
\draw(p0)--(p0ll);
\draw(p0r)--(p0ll);
\draw(p0r)--(p0l);
\draw(p0rr)--(p0l);

\draw(p1)--(p1r);
\draw(p1r)--(p1rr);
\draw(p1rr)--(p1ll);
\draw(p1ll)--(p1l);
\draw(p1l)--(p1);
\draw(p1)--(p1rr);
\draw(p1)--(p1ll);
\draw(p1r)--(p1ll);
\draw(p1r)--(p1l);
\draw(p1rr)--(p1l);

\draw(p2)--(p2r);
\draw(p2r)--(p2rr);
\draw(p2rr)--(p2ll);
\draw(p2ll)--(p2l);
\draw(p2l)--(p2);
\draw(p2)--(p2rr);
\draw(p2)--(p2ll);
\draw(p2r)--(p2ll);
\draw(p2r)--(p2l);
\draw(p2rr)--(p2l);

\draw(p3)--(p3r);
\draw(p3r)--(p3rr);
\draw(p3rr)--(p3ll);
\draw(p3ll)--(p3l);
\draw(p3l)--(p3);
\draw(p3)--(p3rr);
\draw(p3)--(p3ll);
\draw(p3r)--(p3ll);
\draw(p3r)--(p3l);
\draw(p3rr)--(p3l);

\draw(p0l)--(p1r);
\draw(p1l)--(p2r);
\draw(p2l)--(p3r);
\draw(p3l)--(p0r);

\path (p0rr) edge [->,bend right=42] (p0ll);
\path (p1rr) edge [->,bend right=42] (p1ll);
\path (p2rr) edge [->,bend right=42] (p2ll);
\path (p3rr) edge [->,bend right=42] (p3ll);

\node [draw=white, fill=white] (a) at (0.5,0.05)  {$C_3$};
\node [draw=white, fill=white] (a) at (-0.5,0.05)  {$C_3$};
\draw[<-](p0)--(p2);
\draw[<-](p1)--(p3);

\end{tikzpicture}
\end{center}
\vspace{-0.4cm}
\caption{(a) An example of a $\Gamma$-gain graph (with $\Gamma=\langle C_3, s \rangle$) whose covering graph is $5$-mixed connected, but not $\Gamma$-symmetric infinitesimally rigid. The orientation and edge label is omitted for every edge with gain $\mathrm{id}$.
}
\label{fig:dihedral}\end{figure}
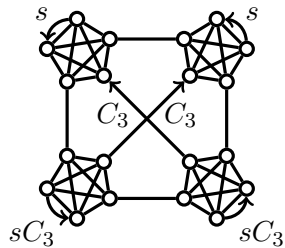

\subsection{Infinitesimal rigidity: reflection and two-fold rotational symmetry}

For every \(n\in \mathbb{N}\), it is easy to construct $\Gamma$-generic frameworks with reflection symmetry $\tau(\Gamma)=\mathcal{C}_s$ or half-turn symmetry $\tau(\Gamma)=\mathcal{C}_2$ whose underlying graphs are \(n\)-connected but that are not $\iota_1$-symmetric infinitesimally rigid. Take, for example, a realisation of the complete graph $K_n$ and its symmetric copy, and a matching between them, with all matching edges `fixed' by the non-trivial element $\gamma\in\Gamma$. (An edge $e=\{i,j\}$ is called \emph{fixed} by $\gamma$ if $\gamma(i)=j$ and $\gamma(j)=i$.) Such a framework is not $\iota_1$-symmetric infinitesimally rigid because a fixed edge in the covering graph $\tilde{G}$ corresponds to a loop in the quotient $\Gamma$-gain graph $(G,\psi)$, and such a loop is dependent by Theorem~\ref{thm:antisymlaman}.

In the following, we therefore only consider the edges of $\tilde G $ that are not fixed.
Let \(\tilde E_{\ell}\) denote the set of fixed edges in \(\tilde G\) and let \(\tilde G_{\ell}\) be \(\tilde G-\tilde E_{\ell}\).

By Theorem~\ref{thm:charcsc2}, we may combine Theorem~\ref{thm:forcedmain}, Lemma~\ref{lem:gainconn} and Theorem~\ref{thm:antimain}  to obtain the following.

\begin{thm}\label{thm:csc2maincov}
Let $(\tilde G,p)$ be a $\Gamma$-generic framework (with respect to $\theta$ and $\tau$) such that $\tau(\Gamma)$ is  $\mathcal{C}_s$ or $\mathcal{C}_{2}$. If 
\(\tilde G_{\ell}\) is 7-mixed-connected, then \((\tilde G,p)\) is  infinitesimally rigid.
\end{thm}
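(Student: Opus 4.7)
The plan is to combine the three results indicated in the sentence preceding the statement: Theorem \ref{thm:forcedmain}, Theorem \ref{thm:antimain}, and Lemma \ref{lem:gainconn}. The first step is to invoke Theorem \ref{thm:charcsc2}: since $\Gamma=\mathbb{Z}_2$ has exactly two irreducible representations, $\iota_0$ and $\iota_1$, the framework $(\tilde G,p)$ is infinitesimally rigid if and only if it is both $\iota_0$-symmetric and $\iota_1$-symmetric infinitesimally rigid. These two sub-cases are treated separately.

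For the $\iota_0$ case, which coincides with $\Gamma$-symmetric infinitesimal rigidity, I observe that $\tilde G_{\ell}$ is a spanning subgraph of $\tilde G$, so $7$-mixed-connectedness of $\tilde G_{\ell}$ transfers to $\tilde G$ and in particular gives $6$-mixed-connectedness. Since $|\Gamma|=2<6$, Theorem \ref{thm:forcedmain} then yields $\Gamma$-symmetric infinitesimal rigidity directly, with no extra hypothesis on the quotient gain graph.

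For the $\iota_1$ case, the correct covering-graph object is $\tilde G_{\ell}$ rather than $\tilde G$. Indeed, each fixed edge in $\tilde E_{\ell}$ projects to an unbalanced loop in the quotient gain graph $(G,\psi)$, and such a loop $\{e\}$ satisfies $\mu(\{e\})=2\cdot 1-3+1=0<1$, so by Theorem \ref{thm:antisymlaman} it is dependent in $\mathcal{R}_\tau^1(G,\psi)$; fixed edges therefore cannot contribute to $\iota_1$-symmetric rigidity. Applying Lemma \ref{lem:gainconn} to $\tilde G_{\ell}$ will translate the assumed $7$-mixed-connectedness into the mixed-connectivity condition on the quotient gain graph of $\tilde G_{\ell}$ required to invoke Theorem \ref{thm:antimain}, which then supplies $\iota_1$-symmetric infinitesimal rigidity.

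The main obstacle has been deferred to the subsequent sections, where Lemma \ref{lem:gainconn} and Theorem \ref{thm:antimain} must be proved. The assembly step itself is essentially routine; the only conceptual point worth flagging here is the jump from the threshold $6$ of Theorem \ref{thm:forcedmain} to $7$ in the present statement, which reflects both the tighter count function $\mu$ of the $\iota_1$-matroid (spanning independent sets have size $2|V|-2$, with loops already dependent) and the necessity of excising the fixed-edge set $\tilde E_{\ell}$ before imposing any mixed-connectivity hypothesis.
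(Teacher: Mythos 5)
Your proposal is correct and follows essentially the same route as the paper, which derives the result by combining Theorem~\ref{thm:charcsc2} (to split into the $\iota_0$ and $\iota_1$ sub-cases) with Theorem~\ref{thm:forcedmain} for $\iota_0$ and Lemma~\ref{lem:gainconn}(a) plus Theorem~\ref{thm:antimain} for $\iota_1$. Your additional observation that $7$-mixed-connectedness of the spanning subgraph $\tilde G_\ell$ carries over to $\tilde G$, and your explanation of why fixed edges must be excised before imposing the connectivity hypothesis, correctly fill in the details the paper leaves implicit.
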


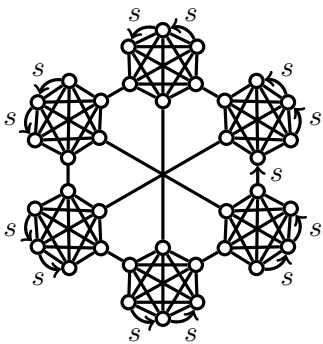
\begin{figure}[htp]
\begin{center}
\begin{tikzpicture}[very thick,scale=0.8]
\tikzstyle{every node}=[circle, draw=black, fill=white, inner sep=0pt, minimum width=5pt];

\node (p1) at (90:1.22cm) {};
\node (p2) at (70:1.6cm) {};
\node (p3) at (75:2.2cm) {};
\node (p4) at (90:2.4cm) {};
\node (p5) at (105:2.2cm) {};
\node (p6) at (110:1.6cm) {};

\node (p11) at (150:1.22cm) {};
\node (p22) at (130:1.6cm) {};
\node (p33) at (135:2.2cm) {};
\node (p44) at (150:2.4cm) {};
\node (p55) at (165:2.2cm) {};
\node (p66) at (170:1.6cm) {};

\node (p111) at (210:1.22cm) {};
\node (p222) at (190:1.6cm) {};
\node (p333) at (195:2.2cm) {};
\node (p444) at (210:2.4cm) {};
\node (p555) at (225:2.2cm) {};
\node (p666) at (230:1.6cm) {};

\node (p1111) at (270:1.22cm) {};
\node (p2222) at (250:1.6cm) {};
\node (p3333) at (255:2.2cm) {};
\node (p4444) at (270:2.4cm) {};
\node (p5555) at (285:2.2cm) {};
\node (p6666) at (290:1.6cm) {};

\node (p11111) at (330:1.22cm) {};
\node (p22222) at (310:1.6cm) {};
\node (p33333) at (315:2.2cm) {};
\node (p44444) at (330:2.4cm) {};
\node (p55555) at (345:2.2cm) {};
\node (p66666) at (350:1.6cm) {};

\node (p111111) at (30:1.22cm) {};
\node (p222222) at (10:1.6cm) {};
\node (p333333) at (15:2.2cm) {};
\node (p444444) at (30:2.4cm) {};
\node (p555555) at (45:2.2cm) {};
\node (p666666) at (50:1.6cm) {};

\draw(p1)--(p2);
\draw(p2)--(p3);
\draw(p3)--(p4);
\draw(p4)--(p5);
\draw(p5)--(p6);
\draw(p1)--(p6);
\draw(p1)--(p3);
\draw(p1)--(p4);
\draw(p1)--(p5);
\draw(p2)--(p4);
\draw(p2)--(p5);
\draw(p2)--(p6);
\draw(p3)--(p5);
\draw(p3)--(p6);
\draw(p4)--(p6);

\draw(p11)--(p22);
\draw(p22)--(p33);
\draw(p33)--(p44);
\draw(p44)--(p55);
\draw(p55)--(p66);
\draw(p11)--(p66);
\draw(p11)--(p33);
\draw(p11)--(p44);
\draw(p11)--(p55);
\draw(p22)--(p44);
\draw(p22)--(p55);
\draw(p22)--(p66);
\draw(p33)--(p55);
\draw(p33)--(p66);
\draw(p44)--(p66);

\draw(p111)--(p222);
\draw(p222)--(p333);
\draw(p333)--(p444);
\draw(p444)--(p555);
\draw(p555)--(p666);
\draw(p111)--(p666);
\draw(p111)--(p333);
\draw(p111)--(p444);
\draw(p111)--(p555);
\draw(p222)--(p444);
\draw(p222)--(p555);
\draw(p222)--(p666);
\draw(p333)--(p555);
\draw(p333)--(p666);
\draw(p444)--(p666);

\draw(p1111)--(p2222);
\draw(p2222)--(p3333);
\draw(p3333)--(p4444);
\draw(p4444)--(p5555);
\draw(p5555)--(p6666);
\draw(p1111)--(p6666);
\draw(p1111)--(p3333);
\draw(p1111)--(p4444);
\draw(p1111)--(p5555);
\draw(p2222)--(p4444);
\draw(p2222)--(p5555);
\draw(p2222)--(p6666);
\draw(p3333)--(p5555);
\draw(p3333)--(p6666);
\draw(p4444)--(p6666);

\draw(p11111)--(p22222);
\draw(p22222)--(p33333);
\draw(p33333)--(p44444);
\draw(p44444)--(p55555);
\draw(p55555)--(p66666);
\draw(p11111)--(p66666);
\draw(p11111)--(p33333);
\draw(p11111)--(p44444);
\draw(p11111)--(p55555);
\draw(p22222)--(p44444);
\draw(p22222)--(p55555);
\draw(p22222)--(p66666);
\draw(p33333)--(p55555);
\draw(p33333)--(p66666);
\draw(p44444)--(p66666);

\draw(p111111)--(p222222);
\draw(p222222)--(p333333);
\draw(p333333)--(p444444);
\draw(p444444)--(p555555);
\draw(p555555)--(p666666);
\draw(p111111)--(p666666);
\draw(p111111)--(p333333);
\draw(p111111)--(p444444);
\draw(p111111)--(p555555);
\draw(p222222)--(p444444);
\draw(p222222)--(p555555);
\draw(p222222)--(p666666);
\draw(p333333)--(p555555);
\draw(p333333)--(p666666);
\draw(p444444)--(p666666);

\path (p3) edge [->,bend right=42] (p4);
\path (p4) edge [->,bend right=42] (p5);

\path (p33) edge [->,bend right=42] (p44);
\path (p44) edge [->,bend right=42] (p55);

\path (p333) edge [->,bend right=42] (p444);
\path (p444) edge [->,bend right=42] (p555);

\path (p3333) edge [->,bend right=42] (p4444);
\path (p4444) edge [->,bend right=42] (p5555);

\path (p33333) edge [->,bend right=42] (p44444);
\path (p44444) edge [->,bend right=42] (p55555);

\path (p333333) edge [->,bend right=42] (p444444);
\path (p444444) edge [->,bend right=42] (p555555);

\draw(p6)--(p22);
\draw(p66)--(p222);
\draw(p666)--(p2222);
\draw(p6666)--(p22222);
\draw[->](p66666)--(p222222);
\draw(p666666)--(p2);

\draw(p1)--(p1111);
\draw(p11)--(p11111);
\draw(p111)--(p111111);

\node [draw=white, fill=white] (a) at (80:2.7cm)  {$s$};
\node [draw=white, fill=white] (a) at (100:2.7cm)  {$s$};

\node [draw=white, fill=white] (a) at (140:2.7cm)  {$s$};
\node [draw=white, fill=white] (a) at (160:2.7cm)  {$s$};

\node [draw=white, fill=white] (a) at (200:2.7cm)  {$s$};
\node [draw=white, fill=white] (a) at (220:2.7cm)  {$s$};

\node [draw=white, fill=white] (a) at (260:2.7cm)  {$s$};
\node [draw=white, fill=white] (a) at (280:2.7cm)  {$s$};

\node [draw=white, fill=white] (a) at (320:2.7cm)  {$s$};
\node [draw=white, fill=white] (a) at (340:2.7cm)  {$s$};

\node [draw=white, fill=white] (a) at (20:2.7cm)  {$s$};
\node [draw=white, fill=white] (a) at (40:2.7cm)  {$s$};

\node [draw=white, fill=white] (a) at (0:1.89cm)  {$s$};

\end{tikzpicture}
\end{center}
\vspace{-0.4cm}
\caption{A $\mathbb{Z}_2$-gain graph (with $\mathbb{Z}_2=\langle s \rangle$) whose covering graph is $6$-mixed connected, but not $\iota_1$-symmetric infinitesimally rigid. The orientation and edge label is omitted for every edge with gain $\mathrm{id}$.
}
\label{fig:6conn}\end{figure}
The example in Figure~\ref{fig:6conn} shows that this result is best possible. It is straightforward to check that the covering graph $\tilde G$ of the graph $(G,\psi)$ in Figure~\ref{fig:6conn} is 6-mixed-connected. To see that $\tilde{G}$ is not $\iota_1$-symmetric infinitesimally rigid (and hence not infinitesimally rigid), consider the partition of the edge set of $(G,\psi)$ consisting of the six \emph{unbalanced} edge sets $E_1,\ldots, E_6$, each of which comprises the edges of a balanced $K_6$ subgraph plus the two induced edges with gain $s$, and the 9 remaining singletons $E_7,\ldots , E_{15}$. Since $\mu(E_i)=10$ for each $i=1,\ldots, 6$, we have $\sum_{i=1}^{15}\mu(E_i)= 69< 70=2|V|-2$. Thus, by Theorems~\ref{thm:antisymlaman} and \ref{lem:2,3,2},  $\tilde{G}$ is not $\iota_1$-symmetric infinitesimally rigid.

\subsection{Infinitesimal rigidity: rotational symmetry of order $k\geq 3$ }
It was shown in \cite{ST} that a generic \(\C_3\)-symmetric framework is \(\C_3\)-symmetric infinitesimally rigid if and only if it is infinitesimally rigid. If we combine this with Theorem \ref{thm:forcedmain} we get the following sufficient condition:

\begin{cor}\label{thm:incidentalc3}
Let $(\tilde G,p)$ be a $\C_3$-generic framework (with respect to $\theta$ and $\tau$).  If \(\tilde G\) is 6-mixed-connected, then \((\tilde G,p)\) is infinitesimally rigid.
\end{cor}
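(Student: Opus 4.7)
The plan is to chain together two readily available ingredients, since the corollary is essentially a two-line combination of prior results.

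First I would invoke Theorem~\ref{thm:forcedmain} with $\Gamma = \mathbb{Z}_3$ and $\tau(\Gamma) = \mathcal{C}_3$. The hypothesis that $\tilde G$ is 6-mixed-connected is exactly what Theorem~\ref{thm:forcedmain} requires on the covering graph. Moreover, since $|\Gamma|=3<6$, the extra assumption that the quotient gain graph $(G,\psi)$ be 2-edge-connected is not triggered. Therefore Theorem~\ref{thm:forcedmain} applies and $(\tilde G, p)$ is $\mathcal{C}_3$-symmetric infinitesimally rigid (i.e.\ $\iota_0$-symmetric infinitesimally rigid, in the language of Section~\ref{subsec:inc}).

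Second, I would appeal to the result of Schulze and Tanigawa from \cite{ST} (quoted explicitly in the paragraph preceding the corollary): for $\mathcal{C}_3$-generic frameworks, $\mathcal{C}_3$-symmetric infinitesimal rigidity is equivalent to (full) infinitesimal rigidity. Combining these two facts immediately yields that $(\tilde G, p)$ is infinitesimally rigid.

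Because both steps are black-box applications of already-cited theorems, I do not anticipate any genuine obstacle. The only subtlety worth remarking on is the check that the 2-edge-connectivity hypothesis on $(G,\psi)$ in Theorem~\ref{thm:forcedmain} is vacuous here, which is immediate from $|\mathbb{Z}_3| = 3 < 6$. Consequently the proof should be a single short paragraph citing Theorem~\ref{thm:forcedmain} and the $\mathcal{C}_3$-equivalence from \cite{ST}.
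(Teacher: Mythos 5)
Your proposal is correct and matches the paper's own reasoning exactly: the paper derives this corollary by combining Theorem~\ref{thm:forcedmain} (noting, as you do, that the $2$-edge-connectivity hypothesis on $(G,\psi)$ is vacuous since $|\Gamma|=3<6$) with the quoted result from \cite{ST} that $\mathcal{C}_3$-symmetric infinitesimal rigidity and infinitesimal rigidity coincide for $\mathcal{C}_3$-generic frameworks.
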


For rotational groups $\C_k$ for odd \(k\) and \(5\leq k <1000\) we can prove the following similar result which follows from Lemma~\ref{lem:gainconn} and Theorem~\ref{thm:incidentalc3g}:

\begin{thm}\label{thm:incidentalc3}
Let $(\tilde G,p)$ be a $\C_k$-generic framework (with respect to $\theta$ and $\tau$) where  \(5\leq k <1000\)  is odd, and let $(G,\psi)$ be the quotient
$\Gamma$-gain graph of $\tilde G$. Suppose \(\tilde G\) is 6-mixed-connected. If $k\geq 7$ suppose further that \((G,\psi)\) is 2-edge-connected. Then \((\tilde G,p)\) is  infinitesimally rigid.
\end{thm}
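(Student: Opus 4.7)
The plan is to deduce this theorem from two results established later in the paper, namely Lemma~\ref{lem:gainconn} and Theorem~\ref{thm:incidentalc3g}, by first passing through Theorem~\ref{thm:charcsc2}. By Theorem~\ref{thm:charcsc2}, $(\tilde G,p)$ is infinitesimally rigid if and only if it is $\iota_t$-symmetric infinitesimally rigid for every irreducible representation $\iota_t$ of $\Gamma=\mathbb{Z}_k$. Hence the task splits into a forced-symmetric case ($t=0$) and $(k-1)$ incidentally-symmetric cases ($t=1,\ldots,k-1$).

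For the $t=0$ case I would simply invoke Theorem~\ref{thm:forcedmain}. When $k=5$ we have $|\Gamma|=5<6$, so the 6-mixed-connectivity of $\tilde G$ is enough on its own; when $k\geq 7$ we have $|\Gamma|\geq 6$, and the additional assumption that $(G,\psi)$ is 2-edge-connected is exactly the extra hypothesis of Theorem~\ref{thm:forcedmain}. So this case is immediate.

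For the cases $t\in\{1,\ldots,k-1\}$, the plan is to apply Theorem~\ref{thm:incidentalc3g}, which is the analogue of our statement formulated on the quotient gain graph rather than on the covering graph. To be able to feed our hypotheses into that theorem, Lemma~\ref{lem:gainconn} is used first to translate the mixed-connectivity of $\tilde G$ (together with the 2-edge-connectivity of $(G,\psi)$ when $k\geq 7$) into the particular gain-graph connectivity condition required by Theorem~\ref{thm:incidentalc3g}. The conclusion of Theorem~\ref{thm:incidentalc3g}, combined with the rank formula $\nu_t(X)=2|V(X)|-3+\alpha_t(X)$ of Theorem~\ref{thm:antisymlamancyc}(iii), then yields a spanning independent set of size $2|V|-1$ (when $t=1$ or $t=k-1$) or $2|V|$ (otherwise), and Theorem~\ref{thm:antisymlamancyc}(ii) gives $\iota_t$-symmetric infinitesimal rigidity.

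The main obstacle in this programme is not in the present section at all but in establishing Theorem~\ref{thm:incidentalc3g}. There, for every partition $\{E_1,\ldots,E_s\}$ of $E$ one must show $\sum_i \nu_t(E_i)\geq 2|V|-1$ or $2|V|$, and the delicate point is the case analysis forced by the definition of $\alpha_t$: a part $E_i$ whose induced subgroup $\langle E_i\rangle_\psi\cong \mathbb{Z}_l$ contributes only $\alpha_t=2$ precisely when $t\equiv 0,\pm1\pmod l$, which constrains when a small part can produce enough ``excess'' to compensate for its low vertex count. Tracking this divisor condition across all $t$ simultaneously, and ensuring it is compatible with the connectivity bounds extracted from $\tilde G$ via Lemma~\ref{lem:gainconn}, is where the real work lies; once that is done, the present theorem is only a reformulation on the covering graph.
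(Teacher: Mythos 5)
Your outer reduction is structurally correct and matches the paper: Theorem~\ref{thm:incidentalc3} is indeed obtained by combining Lemma~\ref{lem:gainconn}(a) (to pass from 6-mixed-connectivity of \(\tilde G\) to 6-gain-mixed-connectivity of \((G,\psi)\)) with Theorem~\ref{thm:incidentalc3g}, and you are also right that for \(5\leq k<6\) the 2-edge-connectivity of \((G,\psi)\) comes for free while for \(k\geq7\) it is a separate hypothesis. However, the split into the forced case \(t=0\) (via Theorem~\ref{thm:forcedmain}) and the incidental cases \(t\geq1\) is superfluous: Theorem~\ref{thm:incidentalc3g} already concludes full infinitesimal rigidity directly, not merely \(\iota_t\)-rigidity for \(t\geq1\), so invoking Theorem~\ref{thm:charcsc2} and Theorem~\ref{thm:forcedmain} separately just duplicates work.

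The more substantive issue is your last paragraph, where you claim that the ``real work'' lies in tracking the divisor condition \(t\equiv0,\pm1\pmod l\) in the definition of \(\alpha_t\). This is not what the paper does, and the concern is in fact a red herring. Since \(\Gamma=\mathbb{Z}_k\) is cyclic, every subgroup \(\langle X\rangle_\psi\) is cyclic, so for any unbalanced set \(X\) we have \(\beta(X)=2\); and \(\alpha_t(X)\in\{2,3\}\) whenever \(X\) is unbalanced, while both are \(0\) when \(X\) is balanced. Hence \(\alpha_t\geq\beta\) and therefore \(\nu_t(E_i)\geq\rho(E_i)\) for every part \(E_i\) and every \(t\). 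This pointwise inequality immediately reduces the required bound \(\sum_i\nu_t(E_i)\geq2|V|\) to the bound \(\sum_i\rho(E_i)\geq2|V|\) that was already established at the end of the proof of Theorem~\ref{thm:forcedgainmain}. No case analysis on \(t\) or on the isomorphism type \(\mathbb{Z}_l\) is ever performed; the divisor condition can only increase \(\alpha_t\) above \(\beta\) and so only helps. Had you tried to carry out the programme you sketch, you would have been doing far more work than necessary and likely gotten bogged down; the actual argument is a two-line observation.
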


\section{Mixed-connectivity versus gain-mixed-connectivity}\label{sec:gainmixed}

We now define a notion of mixed-connectivity for quotient $\Gamma$-gain graphs $(G,\psi)$.
The main result of this section, Lemma \ref{lem:gainconn}, relates this notion, called $n$-gain-mixed-connectivity of $(G,\psi)$, with $n$-mixed-connectivity of the corresponding covering graph $\tilde G$.
The notion of $n$-gain-mixed-connectivity is introduced using definitions similar to the ones that can be found in \cite{kst}.

 Let \((G,\psi)\) be a \(\Gamma\)-gain graph where \(G=(V,E)\) and let \(H=(V',E')\) be a subgraph of \(G\). Suppose that \(H\) is a connected component of \(G-U-D\) for some \(U\subseteq V\), \(D\subseteq E\). Suppose further that if \(G-U-D=H\) then \(\langle E'\rangle_{\psi}\) is a proper subgroup of \(\Gamma\).

Consider the subgraph \(\langle E'\rangle_{\psi}H\) of \(\tilde G\) with vertex set \(\langle E'\rangle_{\psi} V'=\{ \gamma i\, | \, \gamma \in \langle E'\rangle_{\psi}, i\in V'  \}\) and edge set \(\langle E'\rangle_{\psi} E'=\{ \{\gamma i, \gamma \psi(e) j\}\, | \, \gamma \in \langle E'\rangle_{\psi}, e=(i,j)\in E'  \}\).
We will first construct two sets \((W,F)\), \(W\subseteq \tilde V\) and \(F\subseteq \tilde E\), such that one connected component in \(\tilde G-W-F\) is \(\langle E'\rangle_{\psi}H\). Clearly, \(\langle E'\rangle_{\psi}H\) is connected by the connectivity of \(H\). For every edge in \(D\) and every vertex in \(U\) we will take an appropriate corresponding subset of edges or vertices in the covering graph, respectively.

As we are interested in `separating sets' that are minimal we can assume that at least one endpoint of every edge in \(D\) is incident with a vertex in \(V'\) and there is at least one edge in every edge orbit that has an endpoint in \(V(\langle E'\rangle_{\psi}H)\) and one outside of it. Let \(v\in V'\) be a vertex incident with \(e\) in \(G\), and let $o(\psi(e))$ denote the order of the group element $\psi(e)$. Further, define \(e_H\) to be the edges from the orbit of \(e\) that are incident with a vertex from \(V(\langle E'\rangle_{\psi}H)\). By the observation above, we have the following cases:

Case 1: \(e\) is a loop and \(\langle E'\rangle_{\psi}\neq\langle E'+e\rangle_{\psi}\): If \(o(\psi(e))=2\) then
\(|e_H|=|\langle E'\rangle_{\psi}|\) as every vertex from the orbit of \(v\) in \(V(\langle E'\rangle_{\psi}H)\) is incident to exactly one edge in \(e_H\). If \(o(\psi(e))\geq3\) then \(|e_H|=2|\langle E'\rangle_{\psi}|\) as now instead of one there are two edges incident with each vertex from the orbit of \(v\), one that connects \(\gamma v\) with \(\gamma\psi(e) v\) and one that connects \(\gamma v\) with \(\gamma\psi(e)^{-1} v\).

Case 2: \(e\) is a non-loop edge with exactly one endpoint in \(V'\): Clearly \(|e_H|=|\langle E'\rangle_{\psi}|\) holds, as for each copy of \(H\) there is one edge in the orbit of \(e\) that has an endpoint in it.

Case 3: \(e\) is a non-loop edge with both endpoints in \(V'\) and \(\langle E'\rangle_{\psi}\neq\langle E'+e\rangle_{\psi}\): In this case, we clearly have \(|e_H|=2|\langle E'\rangle_{\psi}|\).

For a vertex \(v\notin V'\), let \(H_v=(V'+v,E_v')\) denote the subgraph of \(G\) that contains the edges in \(E'\) and every edge between \(v\) and \(V'\) (but not the loops incident with \(v\) if any exist). Let \(v_H=\langle E_v'\rangle_{\psi}v\). These are the vertices in the orbit of \(v\) that are incident to a vertex in \(V(\langle E'\rangle_{\psi}H)\).

We say that a subgraph \(H=(V',E')\) of \(G\) (with \(\psi\) restricted to \(E'\)) is a \emph{\(k\)-block} if
\begin{itemize}
  \item there is a pair \((U,D)\), \(U\subseteq V\), \(D\subseteq E\), so that \(H\) is a connected component of \(G-U-D\)
  such that \[2\sum_{v\in U}|v_H|+\sum_{e\in D}|e_H|=k\] holds;
  \item if \(G-U-D=H\), then \(\langle E'\rangle_{\psi}\) is a proper subgroup of \(\Gamma\).
\end{itemize}
\((G,\psi)\) is said to be \emph{$n$-gain-mixed-connected} if it has no \(k\)-block with \(k\leq n-1\).

For a \(k\)-block \((U,D)\) of \((G,\psi)\) we may define the corresponding \emph{symmetric separation} of \(\tilde{G}\). Let \(H=(V',E')\) be a connected component of \(G-U-D\). Then
the corresponding symmetric separation is the pair \((U_H,D_H)\) where
\[U_H=\bigcup_{v\in U}v_H\hbox{, }D_H=\bigcup_{e\in D}e_H.\]
We will show in the proof of Lemma \ref{lem:gainconn} that this set indeed disconnects \(\tilde{G}\).

\begin{lem}\label{lem:gainconn}
\begin{enumerate}
  \item[(a)] Suppose that \(\tilde{G}\) is $n$-mixed-connected. Then \((G,\psi)\) is $n$-gain-mixed-connected.
  \item[(b)] Suppose that \((G,\psi)\) is $n$-gain-mixed-connected. Then for every symmetric separation \((W,F)\) of \(\tilde{G}\), \(2|W|+|F|\geq n\) holds.
\end{enumerate}
\end{lem}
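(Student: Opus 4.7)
I would argue (a) by contrapositive and derive (b) almost directly from the definition of symmetric separation. Assume $(G,\psi)$ is not $n$-gain-mixed-connected, so there exists a $k$-block with $k\le n-1$, witnessed by a pair $(U,D)$ and a connected component $H=(V',E')$ of $G-U-D$. I would show that the associated symmetric separation $(U_H,D_H)$ disconnects $\tilde G$, separating $V(\langle E'\rangle_{\psi}H)$ from its complement, and then invoke the $k$-block count $2\sum_{v\in U}|v_H|+\sum_{e\in D}|e_H|=k\le n-1$ to contradict $n$-mixed-connectivity of $\tilde G$.

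The proof of disconnection reduces to three verifications. First, $\langle E'\rangle_{\psi}H$ is a connected subgraph of $\tilde G$: lifting any walk in $H$ between two vertices $i,j\in V'$ produces a walk in $\tilde G$ whose gain lies in $\langle E'\rangle_\psi$, and conversely every element of $\langle E'\rangle_\psi$ is realised as the gain of some closed walk in $H$, so all the vertices $\gamma i$ with $\gamma\in\langle E'\rangle_\psi$, $i\in V'$ are mutually reachable within $\tilde G$ using only edges of $\langle E'\rangle_\psi E'$. Second, and the main step, every edge of $\tilde G$ with exactly one endpoint in $V(\langle E'\rangle_{\psi}H)$ lies in $D_H$ or has its other endpoint in $U_H$. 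I would project such an edge to an edge $e\in E$ and case-split exactly as in the lead-up to the lemma: if both endpoints of $e$ lie in $V'$ then $e\in D$ (otherwise $e$ would be part of the component $H$), so the lift lies in $e_H\subseteq D_H$; if $e$ has exactly one endpoint in $V'$, then because $H$ is a component of $G-U-D$ the other endpoint is in $U$ or $e\in D$, and in the former case the non-$V'$ endpoint of the lift lies in $v_H$ by the definition $v_H=\langle E_v'\rangle_\psi v$ (which is precisely engineered so that any vertex reachable from $V(\langle E'\rangle_\psi H)$ along a single edge projecting to $v$ is included); loops are handled by the same analysis used in Cases 1--3 of the paper. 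Third, $V(\langle E'\rangle_\psi H)$ is a proper subset of $\tilde V\setminus U_H$: if $G-U-D\ne H$ then any other component of $G-U-D$ contributes a vertex orbit disjoint from $U_H$ and from $V(\langle E'\rangle_\psi H)$, while if $G-U-D=H$ the $k$-block hypothesis $\langle E'\rangle_\psi\lneq \Gamma$ provides a further coset of $\langle E'\rangle_\psi$ whose translates of $V'$ lie in $\tilde V\setminus V(\langle E'\rangle_\psi H)$ and, since $U=\emptyset$, outside $U_H$ as well.

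The most delicate point is the second verification, and in particular checking that $\langle E_v'\rangle_\psi$ is exactly the subgroup needed to make $v_H$ absorb all possible neighbours of $V(\langle E'\rangle_\psi H)$ in the orbit of $v$; this is where the definition $E_v'=E'\cup\{\text{edges between }v\text{ and }V'\}$ (excluding loops at $v$) is crucial, and is handled by writing out the lifts of a $V'$-to-$v$ edge with gain $\alpha$ and tracking the cosets of $\langle E'\rangle_\psi$ involved. Once these three points are established, the disjointness of the orbit pieces $\{v_H:v\in U\}$ and $\{e_H:e\in D\}$ (immediate since $\Gamma$ acts freely on $\tilde V$ and each edge of $G$ corresponds to a unique edge orbit of $\tilde G$) gives $|U_H|=\sum_{v\in U}|v_H|$, $|D_H|=\sum_{e\in D}|e_H|$, so $2|U_H|+|D_H|=k\le n-1$, finishing (a).

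For part (b), the definition of a symmetric separation already fixes $2|W|+|F|=2\sum_{v\in U}|v_H|+\sum_{e\in D}|e_H|=k$ for the underlying $k$-block $(U,D)$, using the same disjointness of orbits noted above. Since $(G,\psi)$ is $n$-gain-mixed-connected it has no $k$-block with $k\le n-1$, so $k\ge n$ and hence $2|W|+|F|\ge n$, which is exactly what is claimed.
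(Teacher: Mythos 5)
Your proposal is correct and follows essentially the same route as the paper: both arguments reduce to showing that the symmetric separation $(U_H,D_H)$ attached to a $k$-block disconnects $\tilde G$ with $2|U_H|+|D_H|=k$, and then read off (a) by contrapositive and (b) directly from the definition of $n$-gain-mixed-connectivity. The paper states these verifications very tersely (``by the definitions of $v_H$ and $e_H$ there is no edge leaving $\langle E'\rangle_\psi V'$''), whereas you spell out the lifting argument, the edge-tracing in the three cases, and the properness of $\langle E'\rangle_\psi V'$ in $\tilde V\setminus U_H$ explicitly; this is sound and fills in exactly the details the paper leaves implicit.
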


\begin{proof}
Take a \(k\)-block \(H=(V',E')\) in \(G\) with
corresponding sets \(U,\ D\). The graph
\(\tilde{G}-U_H-D_H\) is not connected, as by the definitions of \(v_H\) and \(e_H\) there is no edge leaving \(\langle E'\rangle_{\psi}V'\), and this set is a proper subset  of \(\tilde V-U_H\).

We also have \(2|U_H|+|D_H|=2\cup_{v\in U}|v_H|+\cup_{e\in D}|e_H|=k\), as \(H=(V',E')\) is a \(k\)-block.

(a) Suppose for a contradiction that \((G,\psi)\) is not $n$-gain-mixed-connected, that is, it has a \(k\)-block \(H\) for some \(k\leq n-1\). Then, by the calculation above, we can deduce that \(\tilde{G}\) is not $n$-mixed-connected, which is a contradiction.

(b) Take a symmetric separation \((W,F)\) of \(\tilde{G}\). There exists a \(k\)-block \(H\) and sets \(U\subseteq V\), \(D\subseteq E\) for which \(W=U_H\) and \(F=D_H\). Using the same calculation again we get that \(2|W|+|F|\geq n\) must hold by the $n$-gain-mixed-connectivity of \((G,\psi)\). This completes the proof.
\end{proof}

\section{Sufficient connectivity conditions for rigidity in term of gain graphs}\label{sec:suffforced}

In this chapter, we establish sufficient criteria for the rigidity of forced symmetric and incidentally smmetric frameworks in the plane. These are given purely in terms of connectivity conditions for the corresponding quotient $\Gamma$-gain graphs. Together with Lemma~\ref{lem:gainconn}, they imply the main results in Section~\ref{sec:coveringsufficient}.

\subsection{Symmetric covers}

To prove our results, we first  need the following definitions. For a graph $\tilde{G}=(\tilde V, \tilde E)$ and disjoint sets $X,Y\subseteq \tilde V$, we let $d_{\tilde G}(X,Y)$ denote the number of edges between $X$ and $Y$, and we let $d_{\tilde G}(X):=d_{\tilde G}(X,\tilde V \setminus X)$.  In particular, $d_{\tilde G}(v)=d_{\tilde G}(\{v\})$ for $v\in \tilde V$.

A set \(\X\subseteq 2^{\tilde V}\) is called a \emph{cover} of \(\tilde G\) if \(\tilde E=\cup_{X\in\X}\tilde E(X)\).
For a partition \(\mathcal{P}=\{E_1,\dots,E_s\}\) of the edge set \(E\) of the quotient $\Gamma$-gain graph $(G,\psi)$ of $\tilde G$, we define a cover \(\X\) of \(\tilde G\) as follows.

Consider \(E_i\in\mathcal{P}\). To simplify notation we will denote \(\langle E_i\rangle_{\psi}\) by \(\Gamma_i\). There exists a labelling \(\psi_i\) equivalent to \(\psi\) such that the label of every edge
in \(E_i\) is an element of \(\Gamma_i\) \cite{jkt}. Choose a representing element of every vertex orbit of \(\tilde G\) such that the chosen elements define \(\psi_i\).
Let \(\tilde V_i\subseteq \tilde V\) contain those of the representing elements which correspond to the vertex orbits of \(V(E_i)\).
Then the vertex set corresponding to \(E_i\) in $\tilde G$ is \(X_i=\Gamma_i\tilde V_i\). Every vertex set \(\gamma X_i\) with \(\gamma\in\Gamma\) belongs to \(\X\).
(Note that these sets are not necessarily pairwise distinct.)
Thus every \(E_i\in\mathcal{P}\) defines \(|\Gamma|/|\Gamma_i|\) vertex sets in \(\X\) and \(\X=\{X\subseteq V:X=\gamma X_i\hbox{ for some }\gamma\in\Gamma,E_i\in\mathcal{P}\}\).
We will call a cover of \(\tilde E\) that can be obtained from a partition of \(E\) by applying the above process a \emph{symmetric cover}.

Consider the gain graph in Figure \ref{fig:gain}(b), for example. We define the partition \(\mathcal{P}=\{E_1,E_2,E_3\}\) as follows. Let \(E_1\) contain the parallel pair of edges, and let
\(E_2\), \(E_3\) be the two singleton sets containing the loop with label \(C_3\) and \(sC_3^2\), respectively. The representative vertices are \(1\) and \(2\)
and \(\tilde V_1=\{1,2\}\), \(\tilde V_2=\tilde V_3=\{1\}\). The groups of the edge sets are \(\Gamma_1=\langle s\rangle\),
\(\Gamma_2=\langle C_3\rangle\), \(\Gamma_3=\langle sC_3^2\rangle\). So \(X_1=\{1,2,s1,s2\}\),
\(X_2=\{1,C_31,C_3^21\}\), \(X_3=\{1,sC_3^21\}\). Finally \(\mathcal{X}=\left\{X_1,C_3X_1,C_3^2X_1,X_2,sX_2,X_3,C_3X_3,C_3^2X_3\right\}\).

We will use the following notation. For \(X\in\X\) let \(E_X=E_i\) for which there is a \(\gamma\in\Gamma\) with \(\gamma X_i=X\). Further, we let \(\X_u=\{X\in\X:|\Gamma_X|\geq 4\}\) and  \(\X_3=\{X\in\X:|X|\geq3\}\). Finally, we let \(\X_2=\{X\in\X : E_X\hbox{ is unbalanced}\}\).

We remark that every edge orbit of a $\Gamma$-symmetric graph contains at most $|\Gamma|$ edges. This implies that the 2-edge-connectivity of \(G\) is a corollary of the 6-mixed-connectivity of
$(\tilde G,p)$ if \(|\Gamma|<6\).

\subsection{Forced symmetric rigidity}

We show the following main theorem, which in turn implies Theorem~\ref{thm:forcedmain}.

\begin{thm}\label{thm:forcedgainmain}
Let $(\tilde G,p)$ be a $\Gamma$-generic framework (with respect to $\theta$ and $\tau$) such that $\tau(\Gamma)$ is  $\mathcal{C}_s$, $\mathcal{C}_k$ or $\mathcal{C}_{(2k+1)v}$,
and let $(G,\psi)$ be the quotient $\Gamma$-gain graph of $\tilde G$. Suppose \((G,\psi)\) is 6-gain-mixed-connected. If \(|\Gamma|\geq6\) then suppose further that \((G,\psi)\) is 2-edge-connected. Then \((\tilde G,p)\) is $\Gamma$-symmetric infinitesimally rigid.
\end{thm}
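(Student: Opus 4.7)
The plan is to argue by contradiction using the matroidal description given by Theorems~\ref{thm:symlaman} and \ref{thm:symrank}. Suppose $(\tilde G,p)$ is not $\Gamma$-symmetric infinitesimally rigid. Then there exists a partition $\mathcal{P}=\{E_1,\dots,E_s\}$ of $E$ with
\[
\sum_{i=1}^{s}\rho(E_i)=\sum_{i=1}^{s}\bigl(2|V(E_i)|-3+\beta(E_i)\bigr)\le r_\Gamma-1,
\]
where $r_\Gamma=2|V|-1$ if $\Gamma$ is a non-trivial cyclic group and $r_\Gamma=2|V|$ if $\Gamma$ is dihedral. The goal is to extract from $\mathcal{P}$ a $k$-block of $(G,\psi)$ with $k\le 5$, directly contradicting the 6-gain-mixed-connectivity hypothesis.

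As a first step, I would pass to a ``minimal'' bad partition: by splitting each $E_i$ into its connected components (which does not increase $\sum\rho$, once one absorbs any isolated vertices that arise), and by a standard uncrossing argument, we may assume every $E_i$ is a connected subgraph on at least two vertices. I would then form the associated symmetric cover $\mathcal{X}$ of $\tilde G$ as in Section~\ref{sec:suffforced}, so that each $E_i$ corresponds to $|\Gamma|/|\Gamma_i|$ translates $\gamma X_i\in\mathcal{X}$ of size $|X_i|=|\Gamma_i|\cdot|V(E_i)|$, where $\Gamma_i:=\langle E_i\rangle_\psi$.

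The key counting step rewrites the deficiency. Using the identity $\sum_i|V(E_i)|=|V|+\sum_{v\in V}(d_{\mathcal P}(v)-1)$, where $d_{\mathcal P}(v)$ denotes the number of blocks of $\mathcal P$ containing $v$, the hypothesis on $\sum\rho(E_i)$ rearranges to
\[
2\sum_{v\in V}\bigl(d_{\mathcal P}(v)-1\bigr)+\sum_{i=1}^{s}\beta(E_i)\le 3s-c,
\]
with $c=2$ in the cyclic case and $c=1$ in the dihedral case. I would then select a candidate block $E_j$ maximising $|\Gamma_j|$ among the blocks of $\mathcal P$, set $H=(V(E_j),E_j)$, $U=V(E_j)\cap V(E\setminus E_j)$, and $D$ equal to the edges of $E\setminus E_j$ incident to $V(E_j)$; by construction $H$ is a connected component of $G-U-D$. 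The multiplicities $|v_H|$ and $|e_H|$ are then read off from the three cases immediately preceding the definition of a $k$-block in Section~\ref{sec:gainmixed}: each boundary vertex contributes a factor at most $|\Gamma_j|$ and each crossing edge contributes $|\Gamma_j|$ or $2|\Gamma_j|$. Translating the per-block deficiency inequality above into a bound on the weight $2\sum_{v\in U}|v_H|+\sum_{e\in D}|e_H|$ via the choice of $j$ yields the desired bound of at most $5$, producing a $k$-block and contradicting 6-gain-mixed-connectivity.

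The main obstacle is the extra clause in the definition of a $k$-block: when $G-U-D=H$, one needs $\langle E_j\rangle_\psi$ to be a \emph{proper} subgroup of $\Gamma$. For $|\Gamma|<6$, every edge orbit has size at most $|\Gamma|\le 5$, so if $H$ spans $G$ with $\langle E_j\rangle_\psi=\Gamma$ the deficiency inequality already forces $k\le 5$ through a nontrivial cut $(U,D)\neq(\emptyset,\emptyset)$, and the clause is automatically satisfied. For $|\Gamma|\ge 6$ this degenerate case is exactly what the 2-edge-connectivity hypothesis on $(G,\psi)$ is designed to rule out: a spanning block $H=(V,E_j)$ with $\langle E_j\rangle_\psi=\Gamma$ would force every other block of $\mathcal P$ to contribute a bridge or cut vertex to $(G,\psi)$, contradicting 2-edge-connectivity. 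Handling this boundary case cleanly, and verifying that the candidate $E_j$ can always be chosen so that $H$ remains connected after deleting the boundary vertices $U$, is where the bulk of the technical work will lie.
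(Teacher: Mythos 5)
Your proposal correctly sets up the contradiction, forms the symmetric cover $\X$, and derives the identity $2\sum_v(d_{\mathcal P}(v)-1)+\sum_i\beta(E_i)\le 3s-c$; this matches the paper's first step. But then you diverge into trying to extract a single small $k$-block, and there are two concrete problems with that.

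First, your choice of $(U,D)$ does not define a $k$-block. You set $H=(V(E_j),E_j)$ and $U=V(E_j)\cap V(E\setminus E_j)$, so $U\subseteq V(E_j)$. But the definition requires $H$ to be a connected component of $G-U-D$, which forces $U\cap V(H)=\emptyset$. With your choice the vertices of $U$ are deleted from $G$, so what remains inside $V(E_j)$ is $H-U$, which need not even be connected, and is in any case not $H$. Beyond this, your heuristic of maximising $|\Gamma_j|$ works against you: the multiplicities $|v_H|$ and $|e_H|$ scale \emph{up} with $|\Gamma_j|$ (they are $|\Gamma_j|$ or $2|\Gamma_j|$ in the three cases), so a large $\Gamma_j$ makes the $k$-block weight larger, not smaller. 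Finally, the step ``translating the per-block deficiency inequality $\ldots$ yields the desired bound of at most $5$'' is asserted without any calculation, and it is not clear it can be made to work: the global bound $2\sum_v(d_{\mathcal P}(v)-1)+\sum_i\beta(E_i)\le 3s-c$ distributes the deficiency over all $s$ blocks and does not obviously localise to a single cheaply separable $E_j$.

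The paper's proof does not try to isolate one block. Instead it proves the key Lemma~\ref{lem:coverlower}, a \emph{global} counting statement: for every symmetric cover $\X$, $\sum_{X\in\X}(2|X|-3)\geq 2|\tilde V|+\sum_{X\in\X_3\cap\X_u}(|\Gamma_X|-3)$. Its proof applies the $6$-gain-mixed-connectivity simultaneously to every $X\in\X'$ via the symmetric separations $(Y_X,\,E(X-Y_X,\tilde V-X))$ and then does careful double-counting on $\tilde V$ and $\tilde E-F$. Theorem~\ref{thm:forcedgainmain} is then obtained by multiplying the bad-partition inequality by $|\Gamma|$, using the identities $|X|=|\Gamma_X||V(E_X)|$ to compare $\rho(E_X)$ with $2|X|-3$, and plugging in Lemma~\ref{lem:coverlower}; the error terms for cyclic $\Gamma_X$ with $|\Gamma_X|\ge 4$ are absorbed by the $\sum_{X\in\X_3\cap\X_u}(|\Gamma_X|-3)$ surplus. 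The $2$-edge-connectivity hypothesis (needed when $|\Gamma|\ge 6$) enters Lemma~\ref{lem:coverlower} precisely in the case $Y_X=\emptyset$, where it guarantees $d_{\tilde G}(X)\ge 2|\Gamma_X|$; your sketch handles that hypothesis in a different and vaguer way. To salvage your approach you would essentially need to reprove Lemma~\ref{lem:coverlower}, at which point you are back to the paper's argument.
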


To prove this result, we need the following key lemma.

\begin{lem}\label{lem:coverlower}
Suppose that \((G,\psi)\) is a 6-gain-mixed-connected $\Gamma$-gain graph. If \(|\Gamma|\geq6\) then suppose further that \((G,\psi)\) is 2-edge-connected. Then for every symmetric cover \(\X\), we have
\[\sum_{X\in\X}(2|X|-3)\geq2|\tilde V|+\sum_{X\in\X_3\cap\X_u}(|\Gamma_X|-3).\]
\end{lem}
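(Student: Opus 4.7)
My plan is to translate the cover inequality into an equivalent numerical statement about the underlying partition $\mathcal{P}=\{E_1,\dots,E_s\}$ of $E$ that defines $\X$, and then to argue by contradiction: if the translated inequality fails then I will exhibit either a $k$-block of $(G,\psi)$ with $k\leq 5$ (contradicting 6-gain-mixed-connectivity) or a single cut edge (contradicting 2-edge-connectivity when $|\Gamma|\geq 6$).

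\emph{Reduction.} Each $E_i$ gives rise to $|\Gamma|/|\Gamma_i|$ sets $X\in\X$, each of size $|\Gamma_i||V(E_i)|$, and $X\in\X_3\cap\X_u$ iff $|\Gamma_i|\geq 4$ (in which case $|X|\geq 4\geq 3$ automatically). Writing $n_i=|V(E_i)|$, $k_i=|\Gamma_i|$, $I_u=\{i:k_i\geq 4\}$, and $d_v=|\{i:v\in V(E_i)\}|$, a direct expansion of both sides and division by $|\Gamma|$ shows that the claimed inequality is equivalent to
\[
2\sum_{v\in V}(d_v-1)\ \geq\ 3\sum_{i\notin I_u}\frac{1}{k_i}\ +\ |I_u|.
\]
Here I use $\sum_i n_i=\sum_v d_v$, together with the fact that 6-gain-mixed-connectivity forbids isolated vertices of $(G,\psi)$ (they would constitute a $0$-block), so $d_v\geq 1$ for every $v$ and the left side is non-negative.

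\emph{Locating a small separator.} Assume the reduced inequality fails. The smallness of its left side forces most vertices to satisfy $d_v=1$, so some part $E_j$ has vertex set $V(E_j)$ that meets the remaining parts only on a small set $U=\{v\in V(E_j):d_v\geq 2\}$ together with a small set $D$ of cross-part edges incident to $V(E_j)$. Taking $V'=V(E_j)\setminus U$ and $E'=\{e\in E_j:V(e)\subseteq V'\}$, the subgraph $H=(V',E')$ is a union of connected components of $G-U-D$, and the proper-subgroup clause in the definition of a $k$-block is automatic since $\langle E'\rangle_\psi\leq \langle E_j\rangle_\psi$ and $V'\neq V$. The block weight $2\sum_{v\in U}|v_H|+\sum_{e\in D}|e_H|$ can now be bounded using the three cases (Case~1 loops, Case~2 singly-incident edges, Case~3 doubly-incident edges) that precede the $k$-block definition, which give $|e_H|\leq 2|\langle E'\rangle_\psi|$ and $|v_H|\leq|\langle E_v'\rangle_\psi|$; combined with the deficit given by the failure of the reduced inequality, this forces $2|U_H|+|D_H|\leq 5$, yielding a $k$-block with $k\leq 5$ that contradicts 6-gain-mixed-connectivity.

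\emph{The case $|\Gamma|\geq 6$ and main obstacle.} The principal difficulty is that when $|\Gamma|\geq 6$ a single cross-part edge $e$ may already carry $|e_H|\geq 6$ on its own (this is exactly the pathology of the pendant-to-unbalanced-loop configuration in Figure~\ref{fig:2conn5conn}(b)), so the case-based bound on $|e_H|$ does not automatically yield $k\leq 5$. The extra 2-edge-connectivity hypothesis is used here: it forbids the separator $(U,D)$ from consisting of a single bridge, so either $|D|\geq 2$ or $U\neq\emptyset$, and the estimate of the previous paragraph then recovers $k\leq 5$. The most delicate bookkeeping is the interplay between the three separator cases and the balanced/cyclic/general stratification of the parts $E_i$, together with the fact that parts with large $k_i$ pay a comparatively small cost $1$ on the right side of the reduced inequality yet can contribute large amounts $|\Gamma_i|$ to the block weight; aligning these with the reduced inequality across all regimes is the central technical obstacle but amounts to a finite and systematic numerical check.
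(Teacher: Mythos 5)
Your approach is genuinely different from the paper's, which proves the inequality directly (not by contradiction) and works entirely in the covering graph $\tilde G$: it writes $\sum_{X\in\X}(2|X|-3)=\sum_{X\in\X_3}(2|X|-3)+|\tilde E-F|$ with $F=\bigcup_{X\in\X_3}\tilde E(X)$, introduces the overlap sets $Y_X=X\cap\bigcup_{X'\in\X_3,X'\neq X}X'$, bounds $|\tilde E-F|$ from below by half the sum of the degrees $d_{\tilde G-Y_X}(X-Y_X)$, invokes Lemma~\ref{lem:gainconn}(b) to get $d_{\tilde G-Y_X}(X-Y_X)\geq\max\{6-2|Y_X|,0\}$, and closes by a double-count of vertices appearing in one versus several members of $\X_3$. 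Your reduction of the target inequality to
\[
2\sum_{v\in V}(d_v-1)\ \geq\ 3\sum_{i\notin I_u}\frac{1}{k_i}\ +\ |I_u|
\]
is correct and is a nice reformulation, but the contradiction argument built on it has a genuine gap.

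The problem is the step ``assume the reduced inequality fails; the smallness of its left side forces most vertices to satisfy $d_v=1$.'' The right side of your reduced inequality scales with the number of parts: a partition in which every $E_i$ is balanced contributes $3s$ to the right side, so failure of the inequality does \emph{not} give an absolute bound on $\sum_v(d_v-1)$. You therefore cannot conclude that $d_v=1$ for most $v$, nor can you pick out a single part $E_j$ whose boundary is small. To localize the deficit you would need some kind of per-part distribution of the left side (e.g.\ apportioning each vertex's $d_v-1$ across the $d_v$ parts containing it) together with a pigeonhole step, and this is precisely the bookkeeping that your last paragraph acknowledges but does not perform. The paper sidesteps this entirely: by estimating degrees of the lifted sets $X-Y_X$ in $\tilde G$, each member of $\X_3$ is charged individually against the mixed-connectivity bound via Lemma~\ref{lem:gainconn}(b), so no global-to-local extraction is needed. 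Until you supply the localization argument (and then carry through the case analysis on $|U_H|$, $|D_H|$ and the role of 2-edge-connectivity when a single lifted edge orbit already has $|e_H|\geq 6$, which you only gesture at), the proposal does not constitute a proof.
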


\begin{proof}
Let \(F=\bigcup_{X\in\X_3}\tilde E(X)\). With this notation, we have
$\sum_{X\in \X}(2|X|-3)=\sum_{X\in \X_3}(2|X|-3)+|\tilde E-F|.$

Let \(Y_X=X\bigcap\bigcup_{X'\in\X_3,X'\neq X}X'\) and  \(\X'=\{X\in\X_3:X\neq Y_X\}\). Then

\[|\tilde E-F|\geq\frac{1}{2}\left(\sum_{X\in\X'}d_{\tilde G-Y_X}(X-Y_X)+\sum_{v\in \tilde V-\tilde V(\X_3)}d_{\tilde G}(v)\right).\]

For an arbitrary \(X\in\mathcal{X}'\) the pair \(Y_X\) and the edge set \(E(X-Y_X,\tilde{V}-X)\) between \(X-Y_X\) and \(\tilde{V}-X\) is a symmetric separation of \(\tilde{G}\). Thus by Lemma \ref{lem:gainconn}(b) \(2|Y_X|+d_{\tilde G-Y_X}(X-Y_X)\geq6\). After reordering this we get \(d_{\tilde G-Y_X}(X-Y_X)\geq\max\{6-2|Y_X|,0\}\) for all \(X\in\X'\). Suppose first that for \(X\in\X\) we have \(Y_X\neq\emptyset\). Observe that if \(v\in Y_X\) for some \(v\in \tilde V\), then \(\gamma v\in Y_X\) for every \(\gamma\in\Gamma_X\). Thus \(|Y_X|\geq|\Gamma_X|\), and if \(|\Gamma_X|\geq4\), then \(d_{\tilde G-Y_X}(X-Y_X)\geq0\geq6-2|Y_X|+(2|\Gamma_X|-6)\). If \(Y_X=\emptyset\), then the same inequality holds, since $d_{\tilde G}(X)\geq 2|\Gamma_X|$ by the 2-edge-connectivity of \((G,\psi)\). Thus

\[|\tilde E-F|\geq3|\X'|-\sum_{X\in\X'}|Y_X|+\sum_{X\in\X'\cap\X_u}(|\Gamma_X|-3)+3(|\tilde V|-|\tilde V(\X_3)|).\]

Using this we have

\[\sum_{X\in \X_3}(2|X|-3)+|\tilde E-F|\geq\]
\[\geq2\sum_{X\in\X_3}|X|-3|\X_3|+3|\X'|-\sum_{X\in\X'}|Y_X|+\sum_{X\in\X'\cap\X_u}(|\Gamma_X|-3)+3(|\tilde V|-|\tilde V(\X_3)|).\]

For every \(X\in\X_3-\X'\) we have \(|Y_X|=|X|\geq\max\{3,|\Gamma_X|\}\). Thus

\[3|\X_3|-3|\X'|=3|\X_3-\X'|\leq\sum_{X\in\X_3-\X'}|Y_X|-\sum_{X\in(\X_3-\X')\cap\X_u}(|\Gamma_X|-3).\]

Using that \(|\tilde V|-|\tilde V(\X_3)|\geq0\) this implies

\[\sum_{X\in \X_3}(2|X|-3)+|\tilde E-F|-2|\tilde V|\geq\]
\[\geq2\sum_{X\in \X_3}|X|-\sum_{X\in\X_3-\X'}|Y_X|-\sum_{X\in\X'}|Y_X|+\sum_{X\in\X_3\cap\X_u}(|\Gamma_X|-3)-2|\tilde V(\X_3)|+(|\tilde V|-|\tilde V(\X_3)|)\]
\[\geq2\sum_{X\in \X_3}(|X|-|Y_X|)+\sum_{X\in\X_3}|Y_X|+\sum_{X\in\X_3\cap\X_u}(|\Gamma_X|-3)-2|\tilde V(\X_3)|.\]

\(2\sum_{X\in \X_3}(|X|-|Y_X|)\) is twice the number of vertices in \(\tilde V(\X_3)\) contained by exactly one \(X\). In \(\sum_{X\in\X_3}|Y_X|\) every vertex contained in some \(Y_X\) with \(X\in\X_3\) is counted at least twice. Thus \(2\sum_{X\in \X_3}(|X|-|Y_X|)+\sum_{X\in\X_3}|Y_X|\geq2|\tilde V(\X_3)|\). Hence

\[\sum_{X\in\X}(2|X|-3)=\sum_{X\in\X_3}(2|X|-3)+|\tilde E-F|\geq2|\tilde V|+\sum_{X\in\X_3\cap\X_u}(|\Gamma_X|-3)\]

as we claimed.
\end{proof}

We are now ready to prove Theorem~\ref{thm:forcedgainmain}.

\begin{proof}
Suppose for a contradiction that \((G,\psi)\) is 6-gain-mixed-connected and 2-edge-connected, but \((\tilde G,p)\) is not $\Gamma$-symmetric infinitesimally rigid. Equivalently, the edge set \(E\) of  \((G,\psi)\) has a partition \(\mathcal{P}=\{E_1,\dots,E_s\}\) with \(\sum_{i=1}^{s}\rho(E_X)\leq 2|V|-2\) if \(\tilde G\) has rotational or reflectional symmetry or \(\sum_{i=1}^{s}\rho(E_X)\leq 2|V|-1\) if \(\tilde G\) has dihedral symmetry.

Construct the symmetric cover \(\X\) of \(\tilde G\) from \(\mathcal{P}\). By the construction of \(\X\)
\begin{equation}\label{eq:numvertices}
|X|=|\Gamma_X||V(E_X)|\hbox{ for every }X\in\X,
\end{equation}

from which we obtain

\begin{equation}\label{eq:2pi3}
2|V(E_X)|-1\geq\frac{2|X|-3}{|\Gamma_X|}\hbox{ if }2\leq|\Gamma_X|\leq3,
\end{equation}

\begin{equation}
2|V(E_X)|-1=\frac{(2|X|-3)-(|\Gamma_X|-3)}{|\Gamma_X|}\hbox{ if }|\Gamma_X|\geq4\hbox{ and \(\Gamma_X\) is cyclic,}
\end{equation}

\begin{equation}
2|V(E_X)|\geq\frac{2|X|-3}{|\Gamma_X|}\hbox{ if \(\Gamma_X\) is dihedral.}
\end{equation}

Let \(\mathcal{P}_b=\{E_i:\Gamma_i\hbox{ is balanced}\}\), \(\mathcal{P}_c=\{E_i:\Gamma_i\hbox{ is cyclic but not balanced}\}\), and \(\mathcal{P}_d=\{E_i:\Gamma_i\hbox{ is dihedral}\}\).

Using the observations above we obtain
\[|\Gamma|\sum_{i=1}^{t}\rho(E_X)
=|\Gamma|\left(\sum_{E_X\in\mathcal{P}_b}(2|V(E_X)|-3)+\sum_{E_X\in\mathcal{P}_c}(2|V(E_X)|-1)+\sum_{E_X\in\mathcal{P}_d}2|V(E_X)|\right)\]
\[\geq|\Gamma|\sum_{E_X\in\mathcal{P}_b}(2|X|-3)+\sum_{E_X\in\mathcal{P}_c}\frac{|\Gamma|}{|\Gamma_X|}(2|X|-3)-|\Gamma|\sum_{E_X\in\mathcal{P}_c,|\Gamma_X|\geq4}\frac{|\Gamma_X|-3}{|\Gamma_X|}+
\sum_{E_X\in\mathcal{P}_d}\frac{|\Gamma|}{|\Gamma_X|}(2|X|-3)\]
\[=\sum_{X\in\X}(2|X|-3)-|\Gamma|\sum_{E_X\in\mathcal{P}_c,|\Gamma_X|\geq4}\frac{|\Gamma_X|-3}{|\Gamma_X|}\]
\[\geq2|\tilde V|+\sum_{X\in\X_3\cap\X_u}(|\Gamma_X|-3)-|\Gamma|\sum_{E_X\in\mathcal{P}_c,|\Gamma_X|\geq4}\frac{|\Gamma_X|-3}{|\Gamma_X|}
\geq 2|\tilde V|=2|\Gamma||V|,\]
where the penultimate inequality follows from Lemma \ref{lem:coverlower}. This is a contradiction which completes the proof.
\end{proof}

\subsection{Infinitesimal rigidity: reflection and two-fold rotational symmetry} \label{subsec:csc2}

Let \((G,\psi)\) be a gain graph. Denote the gain graph by deleting all the loops from \((G,\psi)\) by \((G_{\ell},\psi)\). We show the following main theorem, which in turn implies Theorem~\ref{thm:csc2maincov}:

\begin{thm}\label{thm:antimain}
Let $(\tilde G,p)$ be a $\Gamma$-generic framework (with respect to $\theta$ and $\tau$) such that $\tau(\Gamma)$ is  $\mathcal{C}_s$ or $\mathcal{C}_2$. If \((G_{\ell},\psi)\) is 7-gain-mixed-connected, then \((\tilde G,p)\) is $\iota_1$-symmetric infinitesimally rigid.
\end{thm}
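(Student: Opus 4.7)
The proof follows the template of Theorem~\ref{thm:forcedgainmain}, with the rank function \(\mu\) in place of \(\rho\) and with the loopless graph \((G_\ell,\psi)\) in place of \((G,\psi)\). First, since every loop \(e\) of \((G,\psi)\) satisfies \(\mu(\{e\})=2\cdot 1-3+1=0\), loops are dependent in \(\mathcal{R}_\tau^1(G,\psi)\); hence the rank of \(E\) equals that of \(E(G_\ell)\). By Theorem~\ref{thm:antisymlaman} and Theorem~\ref{lem:2,3,2} it therefore suffices to show that every partition \(\{E_1,\dots,E_s\}\) of \(E(G_\ell)\) satisfies \(\sum_i\mu(E_i)\geq 2|V|-2\).

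Suppose for contradiction that some such partition has \(\sum_i\mu(E_i)\leq 2|V|-3\), and construct the associated symmetric cover \(\mathcal{X}\) of \(\tilde E_\ell\). Since \(|\Gamma|=2\), each class satisfies \(|\Gamma_i|\in\{1,2\}\): a balanced \(E_i\) produces two copies of \(X\) of size \(|V(E_i)|\), while an unbalanced \(E_i\) produces one copy of size \(2|V(E_i)|\). Comparing contributions class by class (the two tallies match on balanced classes, and the cover sum exceeds \(|\Gamma|\mu(E_i)\) by exactly \(1\) on unbalanced classes) yields the identity
\[
\sum_{X\in\mathcal{X}}(2|X|-3)\;=\;|\Gamma|\sum_i\mu(E_i)+|\mathcal{X}_2|,
\]
from which the standing assumption gives \(\sum_{X}(2|X|-3)\leq 2|\tilde V|-6+|\mathcal{X}_2|\).

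The central step is then an analog of Lemma~\ref{lem:coverlower}: \emph{if \((G_\ell,\psi)\) is 7-gain-mixed-connected and \(|\Gamma|=2\), then every symmetric cover \(\mathcal{X}\) of \(\tilde G_\ell\) satisfies \(\sum_{X\in\mathcal{X}}(2|X|-3)\geq 2|\tilde V|-4+|\mathcal{X}_2|\).} Combined with the upper bound above this produces the contradiction \(-4\leq -6\). To obtain the cover estimate I would rerun the proof of Lemma~\ref{lem:coverlower} using the sharper bounds \(d_{\tilde G_\ell-Y_X}(X-Y_X)\geq 7-2|Y_X|\) and \(d_{\tilde G_\ell}(v)\geq 7\) supplied by 7-gain-mixed-connectivity via Lemma~\ref{lem:gainconn}(b); note that the \(\mathcal{X}_u\)-correction present in Lemma~\ref{lem:coverlower} vanishes automatically here because \(|\Gamma_X|\leq |\Gamma|=2<4\).

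The main obstacle is to account for the \(+|\mathcal{X}_2|\) term. For this I would combine the surplus \(\tfrac12|\mathcal{X}'|+\tfrac12(|\tilde V|-|\tilde V(\mathcal{X}_3)|)\) produced by using 7- rather than 6-gain-mixed-connectivity with the following parity observation: every \(X\in\mathcal{X}_2\) is \(\Gamma\)-invariant with \(|X|\geq 4\), and because \(\tilde G_\ell\) has no fixed edges, every \(\Gamma\)-orbit of edges between \(\Gamma\)-invariant sets of \(\tilde G_\ell\) has size exactly \(2\). Hence \(d_{\tilde G_\ell}(X)\) is always even for \(X\in\mathcal{X}_2\), and when \(Y_X\) is itself \(\Gamma\)-invariant (for instance when \(Y_X=\emptyset\)) the odd lower bound \(7-2|Y_X|\) can be upgraded to an even \(\geq 8-2|Y_X|\). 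The combination of these parity gains with the connectivity surplus above should yield the required \(+|\mathcal{X}_2|\) term up to a bounded additive slack, which is absorbed by the \(-4\) on the right-hand side.
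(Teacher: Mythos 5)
Your overall route coincides with the paper's. You correctly reduce the theorem to a cover-inequality in the spirit of Lemma~\ref{lem:coverlower}, and your identity
$\sum_{X\in\mathcal{X}}(2|X|-3)=|\Gamma|\sum_i\mu(E_i)+|\mathcal{X}_2|$
is exactly the computation in the paper's proof of Theorem~\ref{thm:antimain}. The parity observation (cuts of $\Gamma$-invariant sets in the fixed-edge-free graph $\tilde G_\ell$ are even) is also the paper's key idea in Lemma~\ref{lem:CsC2}: it is what upgrades the $7-2|Y_X|$ bound to $8-2|Y_X|=(6-2|Y_X|)+2$ for $X\in\mathcal{X}'\cap\mathcal{X}_2$. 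Also note that your caveat ``when $Y_X$ is itself $\Gamma$-invariant'' is always satisfied for $X\in\mathcal{X}_2$, since $X$ is fixed by $\Gamma$ and $\Gamma$ permutes $\mathcal{X}_3\setminus\{X\}$.

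There is, however, a genuine gap in your sketch of the cover lemma. The parity gain only produces the $+1$ term for members of $\mathcal{X}'\cap\mathcal{X}_2$, i.e., those $X\in\mathcal{X}_2$ with $X\neq Y_X$ where a degree bound actually applies. For $X\in\mathcal{X}_2\setminus\mathcal{X}'$ (those with $Y_X=X$), no cut is counted and the parity argument contributes nothing. If this case is not handled separately, your inequality degrades to $\sum(2|X|-3)\geq 2|\tilde V|+|\mathcal{X}_2|-|\mathcal{X}_2\setminus\mathcal{X}'|$, and $|\mathcal{X}_2\setminus\mathcal{X}'|$ is not bounded, so it is not ``absorbed by the $-4$.'' The paper closes this case via a different, purely combinatorial estimate: for $X\in\mathcal{X}_2$ one always has $|X|=2|V(E_X)|\geq 4$ (because $G_\ell$ has no loops, any unbalanced $E_X$ spans at least two vertices), so $|Y_X|=|X|\geq 4$ rather than the generic $\geq 3$, giving back the missing $+1$ per $X\in\mathcal{X}_2\setminus\mathcal{X}'$ through the term $3|\mathcal{X}_3\setminus\mathcal{X}'|\leq\sum_{\mathcal{X}_3\setminus\mathcal{X}'}|Y_X|-|\mathcal{X}_2\setminus\mathcal{X}'|$. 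You already observe $|X|\geq 4$ for $X\in\mathcal{X}_2$, so the fix is simply to use it in that step; with the bookkeeping carried out exactly, the paper in fact obtains the cleaner bound $\sum_{X\in\mathcal{X}}(2|X|-3)\geq 2|\tilde V|+|\mathcal{X}_2|$, so the slack term $-4$ is not needed at all.
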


To prove this result, we need the following key lemma.

\begin{lem}\label{lem:CsC2}
Suppose that \((G_{\ell},\psi)\) is 7-gain-mixed-connected. Then for every symmetric cover \(\X\), we have
\[\sum_{X\in\X}(2|X|-3)\geq2|\tilde V|+|\X_2|.\]
\end{lem}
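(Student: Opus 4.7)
The plan is to mirror the proof of Lemma~\ref{lem:coverlower}, but use 7-gain-mixed-connectivity of $(G_\ell,\psi)$ in place of 6-gain-mixed-connectivity of $(G,\psi)$, and then extract the extra term $|\mathcal{X}_2|$ from the one-unit improvement.

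First I would fix a symmetric cover $\mathcal{X}$ of $\tilde G$ coming from a partition $\mathcal{P}$ of $E$, put $F=\bigcup_{X\in\mathcal{X}_3}\tilde E(X)$, and start from the same splitting used in Lemma~\ref{lem:coverlower}:
\[
\sum_{X\in\mathcal{X}}(2|X|-3)=\sum_{X\in\mathcal{X}_3}(2|X|-3)+|\tilde E-F|,
\]
together with the lower bound
\[
|\tilde E-F|\geq\tfrac{1}{2}\Bigl(\sum_{X\in\mathcal{X}'}d_{\tilde G-Y_X}(X-Y_X)+\sum_{v\in\tilde V-\tilde V(\mathcal{X}_3)}d_{\tilde G}(v)\Bigr),
\]
where $Y_X$ and $\mathcal{X}'$ are defined exactly as before. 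Since $\Gamma=\mathbb{Z}_2$, no subgroup $\Gamma_X$ has order $\geq 4$, so the term $\sum_{X\in\mathcal{X}_3\cap\mathcal{X}_u}(|\Gamma_X|-3)$ appearing in Lemma~\ref{lem:coverlower} is absent here and we need a different source for the extra $|\mathcal{X}_2|$.

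The new input is Lemma~\ref{lem:gainconn}(b) applied to $(G_\ell,\psi)$. For each $X\in\mathcal{X}'$ the pair $(Y_X,E_{\tilde G_\ell}(X-Y_X,\tilde V-X))$ is a symmetric separation of $\tilde G_\ell$ (coming from the corresponding $k$-block of $G_\ell$ obtained by removing the loops from the block implicit in the argument of Lemma~\ref{lem:coverlower}), so
\[
2|Y_X|+d_{\tilde G_\ell-Y_X}(X-Y_X)\geq 7,\qquad\text{hence}\qquad d_{\tilde G-Y_X}(X-Y_X)\geq 7-2|Y_X|.
\]
If $Y_X=\emptyset$, we use instead the symmetric separation $(\emptyset,E_{\tilde G_\ell}(X,\tilde V-X))$ of $\tilde G_\ell$ to conclude $d_{\tilde G_\ell}(X)\geq 7\geq 2|\Gamma_X|+1$, which plays the role of the 2-edge-connectivity bound. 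For $X\in\mathcal{X}_2$ the set $X$ is $\Gamma$-closed, so no fixed edge of $\tilde G$ joins $X-Y_X$ to $\tilde V-X$, and the inequality uses only edges that already lie in $\tilde G_\ell$; for $X\notin\mathcal{X}_2$ the inequality is weaker in $\tilde G$, but we can add in any fixed boundary edges as a non-negative correction.

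Then I would follow verbatim the chain of inequalities at the end of the proof of Lemma~\ref{lem:coverlower}, replacing every occurrence of the bound $d_{\tilde G-Y_X}(X-Y_X)\geq 6-2|Y_X|$ with $d_{\tilde G-Y_X}(X-Y_X)\geq 7-2|Y_X|$ and every occurrence of $d_{\tilde G}(v)\geq 6$ with $d_{\tilde G}(v)\geq 7$. The net effect is to add an extra $\tfrac{1}{2}(|\mathcal{X}'|+|\tilde V-\tilde V(\mathcal{X}_3)|)$ to the bound achieved in Lemma~\ref{lem:coverlower}, giving
\[
\sum_{X\in\mathcal{X}}(2|X|-3)\geq 2|\tilde V|+\tfrac{1}{2}\bigl(|\mathcal{X}'|+|\tilde V-\tilde V(\mathcal{X}_3)|\bigr).
\]
The final step is to verify $\tfrac{1}{2}(|\mathcal{X}'|+|\tilde V-\tilde V(\mathcal{X}_3)|)\geq|\mathcal{X}_2|$, and this is where I expect the main obstacle to lie. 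I would argue this by exploiting the $\Gamma$-closedness of every $X\in\mathcal{X}_2$: such an $X$ contributes two "symmetric" copies worth of $Y$-intersections with other members of $\mathcal{X}_3$ (or two orbit-related uncovered vertices), so each $X\in\mathcal{X}_2$ contributes at least $2$ to $|\mathcal{X}'|+|\tilde V-\tilde V(\mathcal{X}_3)|$ after being paired with its $\Gamma$-image or a partner set. Carrying out this orbit-based double-counting carefully is the technically delicate piece of the proof.
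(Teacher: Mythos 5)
You have the right overall skeleton (same $F$, same $Y_X$ and $\X'$, boost $6$ to $7$ via 7-gain-mixed-connectivity of $(G_\ell,\psi)$), but the final step of your plan does not work, and the missing ingredient is the parity argument that the paper actually uses. If you simply replace every $6$ by $7$ in the chain of inequalities from Lemma~\ref{lem:coverlower}, the gain you collect is of order $\tfrac{1}{2}|\X'|$ (plus a multiple of $|\tilde V|-|\tilde V(\X_3)|$), because each $d_{\tilde G_\ell-Y_X}(X-Y_X)\geq 7-2|Y_X|$ is only one better than before and the sum is divided by $2$. You would then need $\tfrac{1}{2}|\X'|+(\cdots)\geq|\X_2|$, and there is no reason for that to hold: every $X\in\X_2$ could simultaneously lie in $\X'$, in which case you would need $\tfrac12|\X'|\geq|\X_2|\geq|\X_2\cap\X'|$, which can fail. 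Your suggested orbit-based double-counting cannot rescue this because each $X\in\X_2$ is $\Gamma$-closed, hence is its own $\Gamma$-image; there is no distinct partner set to pair it with.

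What the paper does instead is a parity upgrade of the edge-cut bound, performed \emph{before} dividing by two. If $X\in\X_2$ then $\Gamma_X=\Gamma=\mathbb{Z}_2$, so $X$ is $\Gamma$-closed; consequently $Y_X$ is $\Gamma$-closed and $|Y_X|$ is even, and the edge cut $d_{\tilde G_\ell-Y_X}(X-Y_X)$ is a $\Gamma$-invariant set of non-fixed edges and is therefore also even. This turns $d_{\tilde G_\ell-Y_X}(X-Y_X)\geq 7-2|Y_X|$ into $d_{\tilde G_\ell-Y_X}(X-Y_X)\geq 8-2|Y_X|=(6-2|Y_X|)+2$ for $X\in\X'\cap\X_2$ (handling the cases $Y_X=\emptyset$, $|Y_X|=2$, $|Y_X|\geq4$ separately), which after dividing by two gives a full $+1$ per such $X$, producing the term $|\X'\cap\X_2|$. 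The remaining $|\X_2-\X'|$ comes from the observation that for $X\in\X_2-\X'$ one has $|Y_X|=|X|\geq 4$ rather than $\geq 3$, tightening the inequality $3|\X_3-\X'|\leq\sum_{X\in\X_3-\X'}|Y_X|$ by $|\X_2-\X'|$. Without the evenness of $|Y_X|$ and of the $\Gamma$-invariant cut in $\tilde G_\ell$, you cannot convert the half-unit gain from 7-connectivity into the required full unit per element of $\X_2$, so the proof as you have sketched it does not close.
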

\begin{proof}
Let \(F=\bigcup_{X\in\X_3}\tilde E(X)\). With this notation, we have
\[\sum_{X\in \X}(2|X|-3)=\sum_{X\in \X_3}(2|X|-3)+|\tilde E-F|.\]

Let \(Y_X=X\bigcap\bigcup_{X'\in\X_3,X'\neq X}X'\) and \(\X'=\{X\in\X_3:X\neq Y_X\}\). Then
\[|\tilde E-F|\geq\frac{1}{2}\left(\sum_{X\in\X'}d_{\tilde G_{\ell}-Y_X}(X-Y_X)+\sum_{v\in \tilde V-\tilde V(\X_3)}d_{\tilde G}(v)\right).\]

By the 7-gain-mixed-connectivity of \((G_{\ell},\psi)\) we have \(d_{\tilde G_{\ell}-Y_X}(X-Y_X)\geq\max\{7-2|Y_X|,0\}\) for all \(X\in\X'\). Observe that if \(X\in\X_2\) then \(|Y_X|\) has to be even. Suppose first that for \(X\in\X'\cap\X_2\) we have \(|Y_X|\geq4\). Then \(d_{\tilde G_{\ell}-Y_X}(X-Y_X)\geq6-2|Y_X|+2\). If \(|Y_X|=2\) then \(d_{\tilde G_{\ell}-Y_X}(X-Y_X)\geq3=7-2|Y_X|\) but as \(d_{\tilde G_{\ell}-Y_X}(X-Y_X)\) must be even, we can also deduce \(d_{\tilde G_{\ell}-Y_X}(X-Y_X)\geq6-2|Y_X|+2\). If \(Y_X=\emptyset\) then \(d_{\tilde G_{\ell}-Y_X}(X-Y_X)\geq7=7-2|Y_X|\) and again by the parity argument \(d_{\tilde G_{\ell}-Y_X}(X-Y_X)\geq6-2|Y_X|+2\). Thus
\[|\tilde E-F|\geq3|\X'|-\sum_{X\in\X'}|Y_X|+|\X'\cap\X_2|+3(|\tilde V|-|\tilde V(\X_3)|).\]

Using this we have
\[\sum_{X\in \X_3}(2|X|-3)+|\tilde E-F|\]
\[\geq2\sum_{X\in\X_3}|X|-3|\X_3|+3|\X'|-\sum_{X\in\X'}|Y_X|+|\X'\cap\X_2|+3(|\tilde V|-|\tilde V(\X_3)|).\]

For every \(X\in\X_2-\X'\) we have \(|Y_X|=|X|\geq4\) and for every \(X\in\X_3-\X'\) we have \(|Y_X|=|X|\geq3\). Thus
\[3|\X_3|-3|\X'|=3|\X_3-\X'|\leq\sum_{X\in\X_3-\X'}|Y_X|-|\X_2-\X'|.\]

Using  \(|\tilde V|-|\tilde V(\X_3)|\geq0\) this implies
\[\sum_{X\in \X_3}(2|X|-3)+|\tilde E-F|-2|\tilde V|\]
\[\geq2\sum_{X\in \X_3}|X|-\sum_{X\in\X_3-\X'}|Y_X|-\sum_{X\in\X'}|Y_X|+|\X_2|-2|\tilde V(\X_3)|+(|\tilde V|-|\tilde V(\X_3)|)\]
\[\geq2\sum_{X\in \X_3}(|X|-|Y_X|)+\sum_{X\in\X_3}|Y_X|+|\X_2|-2|\tilde V(\X_3)|.\]

\(2\sum_{X\in \X_3}(|X|-|Y_X|)\) is twice the number of vertices in \(\tilde V(\X_3)\) contained by exactly one \(X\). In \(\sum_{X\in\X_3}|Y_X|\) every vertex contained in some \(Y_X\) with \(X\in\X_3\) is counted at least twice. Thus \(2\sum_{X\in \X_3}(|X|-|Y_X|)+\sum_{X\in\X_3}|Y_X|\geq2|\tilde V(\X_3)|\). Hence
\[\sum_{X\in\X}(2|X|-3)=\sum_{X\in\X_3}(2|X|-3)+|\tilde E-F|\geq2|\tilde V|+|\X_2|,\]
as we claimed.
\end{proof}

We are now ready to prove Theorem~\ref{thm:antimain}.

\begin{proof}
Suppose for a contradiction that \((G_{\ell},\psi)\) is 7-gain-mixed-connected but \((\tilde G,p)\) is not $\iota_1$-symmetric infinitesimally rigid. Equivalently, the edge set \(E\) of the quotient $\Gamma$-gain graph \((G,\psi)\) has a partition \(\mathcal{P}=\{E_1,\dots,E_s\}\) with \(\sum_{i=1}^{s}\mu(E_X)\leq 2|V|-3\).
Construct the symmetric cover \(\X\) of \(\tilde G\) from \(\mathcal{P}\). Let \(\mathcal{P}_b=\{E_i:\Gamma_i\hbox{ is balanced}\}\), \(\mathcal{P}_c=\{E_i:\Gamma_i\hbox{ is cyclic but not balanced}\}\).
We have
\[2\sum_{i=1}^s\mu(E_X)=2\left(\sum_{E_X\in\mathcal{P}_b}(2|V(E_X)|-3)+\sum_{E_X\in\mathcal{P}_c}(2|V(E_X)|-2)\right)\]
\[=2\sum_{E_X\in\mathcal{P}_b}(2|X|-3)+\sum_{E_X\in\mathcal{P}_c}(2|X|-4)=\sum_{X\in\X}(2|X|-3)-|\X_2|\geq2|\tilde V|=4|V|,\]
where the last inequality follows from Lemma \ref{lem:CsC2}. This is a contradiction which completes the proof.
\end{proof}

\subsection{Infinitesimal rigidity: rotational symmetry of order $k\geq 3$}

Finally, we prove the following main result for the groups $\C_k$, $5 \leq k< 1000$ odd, which in turn implies Theorem~\ref{thm:incidentalc3}.

\begin{thm}\label{thm:incidentalc3g}
Let $(\tilde G,p)$ be a $\C_k$-generic framework (with respect to $\theta$ and $\tau$) where  \(5\leq k <1000\)  is odd, and let $(G,\psi)$ be the quotient
$\Gamma$-gain graph of $\tilde G$. Suppose \(\tilde G\) is 6-mixed-connected. If $k\geq 7$ suppose further that \((G,\psi)\) is 2-edge-connected. Then 
\((\tilde G,p)\) is  infinitesimally rigid.
\end{thm}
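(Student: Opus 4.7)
The plan is to reduce, via Theorem~\ref{thm:charcsc2}, to showing $\iota_t$-symmetric infinitesimal rigidity for every irreducible representation $\iota_t$ of $\Gamma=\mathbb{Z}_k$. By Lemma~\ref{lem:gainconn}(a) the 6-mixed-connectivity of $\tilde G$ transfers to 6-gain-mixed-connectivity of $(G,\psi)$, so the hypotheses of Theorem~\ref{thm:forcedgainmain} and Lemma~\ref{lem:coverlower} are available throughout.

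For $t=0$, $\iota_0$-symmetric infinitesimal rigidity is exactly forced $\Gamma$-symmetric infinitesimal rigidity, and thus follows from Theorem~\ref{thm:forcedmain}. For $t\in\{1,k-1\}$ I would show that $\nu_t(X)=\rho(X)$ for every $X\subseteq E$, so that the matroids $\mathcal{R}_\tau^t(G,\psi)$ and $\mathcal{R}_\tau(G,\psi)$ coincide and rank $\geq 2|V|-1$ follows again from Theorem~\ref{thm:forcedmain}. Indeed, since $\Gamma=\mathbb{Z}_k$ is cyclic, each unbalanced connected piece of $X$ has gain group $\mathbb{Z}_l$ with $l\mid k$ and $l\geq 2$; for $t=1$ this gives $t\equiv 1\pmod l$, and for $t=k-1$ the divisibility $l\mid k$ yields $k-1\equiv -1\pmod l$, so $\alpha_t=2=\beta$ in either case.

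The core case is $t\notin\{0,1,k-1\}$, where by Theorem~\ref{thm:antisymlamancyc}(ii,iii) I must prove $\sum_i \nu_t(E_i)\geq 2|V|$ for every partition $\mathcal{P}=\{E_1,\dots,E_s\}$ of $E$. I would split into two subcases. When $\mathcal{P}=\{E\}$ is trivial, 6-mixed-connectivity of $\tilde G$ forces $\langle E\rangle_\psi=\Gamma=\mathbb{Z}_k$, and since $t\not\equiv 0,\pm 1\pmod k$ we obtain $\alpha_t(E)=3$, giving $\nu_t(E)=2|V|-3+3=2|V|$. When $|\mathcal{P}|\geq 2$, I would use the pointwise inequality $\nu_t\geq\rho$ (which holds because $\alpha_t\geq\beta$ for cyclic $\Gamma$) and rerun the computation from the proof of Theorem~\ref{thm:forcedgainmain}: the induced symmetric cover $\X$ is nontrivial, Lemma~\ref{lem:coverlower} applies, and the exact cancellation between the correction term $|\Gamma|\sum_{|\Gamma_i|\geq 4}(|\Gamma_i|-3)/|\Gamma_i|$ and the lemma's extra contribution $\sum_{X\in\X_3\cap\X_u}(|\Gamma_X|-3)$ yields $|\Gamma|\sum_i\rho(E_i)\geq 2|\tilde V|$, hence $\sum_i\nu_t(E_i)\geq\sum_i\rho(E_i)\geq 2|V|$.

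The main obstacle lies in the nontrivial subcase. The point requiring care is that the proof of Theorem~\ref{thm:forcedgainmain} is written as a proof by contradiction under the assumption $\sum\rho\leq 2|V|-2$, but what one actually needs here is the stronger assertion that the same computation produces $\sum\rho\geq 2|V|$ for \emph{every} partition with $|\mathcal{P}|\geq 2$. To confirm the applicability of Lemma~\ref{lem:coverlower} to the induced cover, one verifies that whenever some $X_i=\tilde V$ appears in $\X$, the assumption $|\mathcal{P}|\geq 2$ guarantees at least one other nonempty $X_j\subseteq\tilde V$, so that $Y_{X_i}\supseteq X_j\neq\emptyset$ and the symmetric-separation bound from Lemma~\ref{lem:gainconn}(b) becomes valid for that $X_i$, making the remainder of the lemma's proof go through unchanged.
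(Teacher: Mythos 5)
Your proof is correct in outline and follows the same basic route as the paper: the crux is the pointwise inequality $\nu_t\geq\rho$ for cyclic $\Gamma$, combined with the chain of inequalities from the proof of Theorem~\ref{thm:forcedgainmain} (which, via Lemma~\ref{lem:coverlower}, actually delivers $\sum_i\rho(E_i)\geq 2|V|$ rather than merely $\geq 2|V|-1$). The paper states this in two lines; you expand it into a case analysis by $t$ and by partition size, which is more verbose than necessary — given $\nu_t\geq\rho$ and the two thresholds $2|V|-1$ (for $t\in\{0,1,k-1\}$) and $2|V|$ (otherwise), one never actually needs the exact identity $\nu_t=\rho$ for $t\in\{1,k-1\}$, nor a separate appeal to Theorem~\ref{thm:forcedmain} for $t=0$. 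That said, your instinct to worry about the trivial partition $\mathcal P=\{E\}$ is a legitimate one: there the induced cover is $\X=\{\tilde V\}$, Lemma~\ref{lem:coverlower}'s conclusion fails outright (its left side is $2|\tilde V|-3$), and one indeed only gets $\sum\rho=2|V|-1$; the paper's phrase ``by the end of the proof of Theorem~\ref{thm:forcedgainmain} we get $\sum\nu_t\geq 2|V|$'' quietly skips this.

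Two points in your writeup, however, do not quite stand. First, your assertion $\alpha_t(E)=3$ for the trivial partition and $t\notin\{0,1,k-1\}$ tacitly assumes $E$ is not near-balanced; this is plausible under $6$-mixed-connectivity but is asserted, not verified, and is the genuine subtlety that the paper also leaves unaddressed. Second, the patch in your final paragraph is wrong as stated: $Y_{X_i}$ is defined as $X_i\cap\bigcup_{X'\in\X_3,\,X'\neq X_i}X'$, so the mere existence of a second part $E_j$ does \emph{not} force $Y_{X_i}\neq\emptyset$ — if $|X_j|\leq 2$ for all $j\neq i$, then $\X_3=\{\tilde V\}$ and $Y_{\tilde V}=\emptyset$ despite $|\mathcal P|\geq 2$. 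In that degenerate configuration Lemma~\ref{lem:coverlower} is still inapplicable; the bound $\sum\rho\geq 2|V|$ nonetheless holds because each small part has $\rho(E_j)=1$ while $\rho(E_1)=2|V|-1$, so $\sum\rho=(2|V|-1)+(s-1)\geq 2|V|$ directly. So your conclusion is right but your stated reason for it is not.
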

\begin{proof}
Take  an arbitrary partition \(E_1,\dots,E_s\) of \(E(G)\). By Theorem \ref{thm:antisymlamancyc}, \(G\) is infinitesimally rigid if and only if
\(\sum_{i=1}^{s}\nu_t(E_i)\geq 2|V|-1\) holds for \(t=0,1,k-1\), and otherwise \(\sum_{i=1}^{s}\nu_t(E_i)\geq2|V|\) holds for every \(0\leq t\leq k-1\).

Note that $\sum_{i=1}^{s}\nu_t(E_i)\geq\sum_{i=1}^{s}\rho(E_i).$
Thus, by (the end of the proof of) Theorem \ref{thm:forcedgainmain} we get that
\(\sum_{i=1}^{s}\nu_t(E_i)\geq 2|V|\), from which the result follows.
\end{proof}

\section{Further work}

In this paper we gave sufficient connectivity conditions for the two different types of rigidity of symmetric frameworks in the plane for every point group for which a combinatorial characterisation of rigidity is known. The main open question remaining is whether one can give similar conditions for the remaining point groups in the plane or for higher dimensional symmetric frameworks. As finding combinatorial characterisations for rigidity in those cases are key open problems of the field, proving a similar result would require a method different from the one used in this paper.

A similar question can be asked about (infinite) periodic frameworks. As combinatorial characterisations for rigidity of periodic frameworks are known in many cases, this problem seems to be easier to attack than the ones mentioned above. For periodic frameworks 
it is probably 
most suitable to try to give a connectivity condition for the quotient gain graph instead of one for the covering graph. 

Another problem that arises from the investigation of \(n\)-gain-mixed-connected group-labelled graphs is whether this property can be checked in polynomial time. When \(n\) is fixed, then there is a trivial polynomial time algorithm that can decide \(n\)-gain-mixed-connectivity by checking whether \(G\) has a \(k\)-block for all \(k\leq n-1\). As \(|v_H|\geq1\) for every \(v\in V\) and \(|e_H|\geq1\) for every \(e\in E\) it suffices to consider every pair \((U,D)\), \(U\subseteq V\), \(D\subseteq E\) with \(2|U|+|D|\leq k\) and check whether one of the components of \(G-U-D\) is a \(k\)-block. This can be done by making the labels of a spanning tree in a component the identity. The group induced by the remaining edges is the symmetry group of the component. In this paper we only consider the cases \(n=6\) and 7 so this algorithm can be used for checking whether the sufficient conditions in our results hold for \(G\). The complexity of this algorithm depends on \(n\). Thus designing an efficient algorithm that finds the largest \(n\) for which \(G\) is \(n\)-gain-mixed-connected remains an open problem.

\section{Acknowledgements}

  The first author was supported by the Hungarian Scientific Research Fund of the National Research, Development and Innovation Office (OTKA, grant number K109240 and K124171). The second author was supported by EPSRC First Grant EP/M013642/1.

\end{document}